\definecolor{cream}{RGB}{203, 237, 204}
\newtheorem{theorem}{Theorem}[section]
\newtheorem{lemma}[theorem]{Lemma}
\newtheorem{proposition}[theorem]{Proposition}
\newtheorem{definition}[theorem]{Definition}
\newtheorem{remark}[theorem]{Remark}
\newtheorem{observation}[theorem]{Observation}
\newcommand{\F}{{\mathcal{F}}}
\newcommand{\mG}{{\mathcal{G}}}
\newcommand{\A}{{\mathcal{A}}}
\newcommand{\B}{{\mathcal{B}}}
\newcommand{\C}{{\mathcal{C}}}
\newcommand{\mH}{{\mathcal{H}}}
\newcommand{\mF}{{\mathcal{F}}}
\title{Focal-free uniform hypergraphs and codes}
\author{
Xinqi~Huang\thanks{School of Mathematical Sciences, University of Science and Technology of China, Hefei, 230026, Anhui, China. 
    Emails: \texttt{huangxq@mail.ustc.edu.cn},
    \texttt{zhaoyh21@mail.ustc.edu.cn}.}
\and Chong~Shangguan\thanks{Research Center for Mathematics and Interdisciplinary Sciences, Shandong
University, Qingdao, 266237, China; Frontiers Science Center for Nonlinear Expectations, Ministry of
Education, Qingdao, 266237, China. 
    Email: \texttt{theoreming@163.com}.}
\and Xiande~Zhang\thanks{School of Mathematical Sciences, University of Science and Technology of China, Hefei, 230026, Anhui, China; Hefei National Laboratory, University of Science and Technology of China, Hefei, 230088, Anhui, China. 
    Email: \texttt{drzhangx@ustc.edu.cn}.} 
\and Yuhao~Zhao\footnotemark[1]
}
\date{}
\begin{document}
\maketitle
\begin{abstract}
    \noindent Motivated by the study of a variant of sunflowers,  Alon and Holzman recently introduced focal-free hypergraphs. In this paper, we show that there is an interesting connection between the maximum size of focal-free hypergraphs and the renowned Erd\H{o}s Matching Conjecture on the maximum number of edges that can be contained in a uniform hypergraph with bounded matching number. As a consequence, we give asymptotically optimal bounds on the maximum sizes of focal-free uniform hypergraphs and codes, thereby significantly improving the previous results of Alon and Holzman. Moreover, by using the existentce results of combinatorial designs and orthogonal arrays, we are able to explicitly determine the exact sizes of maximum focal-free uniform hypergraphs and codes for a wide range of parameters. 
\end{abstract}
	
\section{Introduction}

Questions asking for the maximum sizes of set systems without containing certain forbidden configurations are widely studied in extremal combinatorics. They usually have various applications in coding theory and theoretical computer science (see, e.g.,~\cite{jukna2011extremal}). In this paper, we study extremal problems concerning forbidding $r$-focal hypergraphs, and present asymptotically optimal bounds on their maximum sizes.

We begin with some definitions. For a positive integer $n$, write $[n]$ for the set $\{1,2,\ldots,n\}$. For a finite set $X$, let $2^X$ denote the power set of $X$ and $\binom{X}{k}$ denote the set of all $k$-subsets of $X$. A hypergraph $\mathcal{F}$ on the vertex set $V(\mathcal{F})$ is a family of distinct subsets (called edges) of $V(\mathcal{F})$. We assume without loss of generality that every vertex in $V(\mathcal{F})$ is contained in at least one edge in $\mathcal{F}$.

\begin{definition}
    A family of $r$ distinct sets $A_0,A_1,\ldots,A_{r-1} \subseteq 2^{[n]}$ is said to be an {\it $r$-focal hypergraph} with focus $A_0$ if for every $x\in A_0$, we have
\begin{align*}
    |\{i\in[r-1]:x\in A_i\}|\ge r-2.
\end{align*}

\noindent A hypergraph $\mathcal{F}\subseteq 2^{[n]}$ is {\it $r$-focal-free} if it does not contain any $r$-focal hypergraph.
\end{definition}

\noindent Note that the above definition is trivial for $r=2$; therefore, we assume throughout that $r\ge 3$. Focal-free hypergraphs were introduced by Alon and Holzman~\cite{alon2023near} when they were studying a variant of sunflowers, called near-sunflowers (see \cite{alon2023near} and the end of this section for more backgrounds). 

A hypergraph $\mathcal{F}\subseteq 2^{[n]}$ is said to be {\it $k$-uniform} if $\mathcal{F}\subseteq\binom{[n]}{k}$. Let $f_r(n,k)$ denote the maximum cardinality of an $n$-vertex $r$-focal-free $k$-uniform hypergraph. Alon and Holzman (see \cite[Theorem 5.2]{alon2023near}) showed that\footnote{In \cite{alon2023near}, uniform focal-free hypergraphs were called {\it ``one-sided focal families''}.} for all $r\ge 3$ and $0\leq k\leq n$, 
\begin{equation}\label{thm-alon-onesided}
    f_r(n,k)\leq (r-1)\cdot\binom{n}{\lceil\frac{(r-2)k}{r-1}\rceil}\bigg/\binom{k}{\lceil\frac{(r-2)k}{r-1}\rceil}.
\end{equation}

\noindent They also commented that ``a lower bound on $f_r(n,k)$ may be obtained by random choice with alterations, but optimizing the bound requires rather messy calculations.''

Quite surprisingly, we notice that packings provide a cheap lower bound for $f_r(n,k)$. For integers $n>k>t\ge 2$, an {\it $(n,k,t)$-packing} is a $k$-uniform hypergraph $\mathcal{P}\subseteq\binom{[n]}{k}$ such that $|A\cap B|<t$ for every two distinct $A,B\in\mathcal{P}$. We observe that every $(n,k,\lceil\frac{(r-2)k}{r-1}\rceil)$-packing $\mathcal{P}$ is $r$-focal-free. 
Indeed, assume otherwise that there exist $A_0,A_1,\ldots,A_{r-1}\subseteq\mathcal{P}$ that form 
an $r$-focal hypergraph with focus $A_0$. Then, $A_0\setminus A_1,\ldots,A_0\setminus A_{r-1}$ must be pairwise disjoint subsets of $A_0$. Therefore, $\sum_{i=1}^{r-1} |A_0\setminus A_i|\le |A_0|=k$, which implies that there exists some $i\in [r-1]$ such that $|A_0\setminus A_i|\le\lfloor\frac{k}{r-1}\rfloor$. Consequently, $|A_0\cap A_i|\ge \lceil\frac{(r-2)k}{r-1}\rceil$, a contradiction. A celebrated result of R\"odl \cite{Rodl-nibble} showed that for fixed $k,t$ and sufficiently large $n$, there exist asymptotically optimal $(n,k,t)$-packings with cardinality $(1-o(1))\cdot\binom{n}{t}/\binom{k}{t}$, where $o(1)\rightarrow 0$ as $n\rightarrow\infty$. Together with the above discussion, it yields that  
\begin{align}\label{eq:cheap-bound}
    (1-o(1))\cdot \binom{n}{\lceil\frac{(r-2)k}{r-1}\rceil}\bigg/\binom{k}{\lceil\frac{(r-2)k}{r-1}\rceil}\le f_r(n,k)\leq (r-1) \cdot \binom{n}{\lceil\frac{(r-2)k}{r-1}\rceil}\bigg/\binom{k}{\lceil\frac{(r-2)k}{r-1}\rceil}.
\end{align}

Note that for $k\ge r-1$ we have $\lceil\frac{(r-2)k}{r-1}\rceil<k$, and determining $f_r(n,k)$ is a {\it degenerate} Tur\'an-type extremal problem for hypergraphs. Conventionally speaking, if there exist two reals $\alpha_r(k)>0$ and $\beta_r(k)\in(0,k)$ such that $\lim_{n\rightarrow\infty}\frac{f_r(n,k)}{n^{\beta_r(k)}}=\alpha_r(k)$, then we call $\alpha_r(k)$ and $\beta_r(k)$ the {\it degenerate Tur\'an density} and {\it Tur\'an exponent} of $f_r(n,k)$, respectively. Clearly, \eqref{eq:cheap-bound} already shows that $\beta_r(k)=\lceil\frac{(r-2)k}{r-1}\rceil$ but it gives no hint on the existence or the exact value of $\alpha_r(k)$. 

Our first main result significantly improves upon \eqref{thm-alon-onesided} (and also on \eqref{eq:cheap-bound}) by explicitly determining the exact values of the degenerate Tur\'an density and Tur\'an exponent of $f_r(n,k)$. In particular, to determine $\alpha_r(k)$, we establish an interesting connection between focal-free uniform hypergraphs and the well-known Erd\H{o}s Matching Conjecture~\cite{erdos1965problem}. Let $m(n,s,\lambda)$ denote the maximum number of edges that can be contained in an $s$-uniform hypergraph on $n$ vertices that does not contain $\lambda$ pairwise disjoint edges. Erd\H{o}s \cite{erdos1965problem} famously conjectured that for all $n\geq s\lambda$,\[m(n,s,\lambda) = \max \left\lbrace  \binom{n}{s}-\binom{n-\lambda+1}{s}, \binom{s\lambda-1}{s} \right\rbrace.\]
The above conjecture has been partially confirmed; see \cite{Frankl-Kupavskii-The-Erods-matching-conjecture} and the references therein for details.

The following theorem shows that $\alpha_r(k)$ can be precisely determined in terms of $m(n,s,\lambda)$.

\begin{theorem}\label{thm:main-hypergraph}
	 For all fixed $r\ge 3$ and $k\geq 2$, we have
	 \begin{equation*}
	 	\lim_{n\to\infty}f_r(n,k)\bigg/\binom{n}{\lceil\frac{(r-2)k}{r-1}\rceil} = 
	 	\frac{1}{\binom{k}{\lceil\frac{(r-2)k}{r-1}\rceil} - m(k,\lfloor\frac{k}{r-1}\rfloor,\lambda)} ,
	 \end{equation*}
	 where $\lambda\in [r-1]$ is the unique integer that satisfies $k + \lambda\equiv 0\pmod{r-1}$.
\end{theorem}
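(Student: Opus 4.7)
Set $s=\lfloor k/(r-1)\rfloor$ and $t=k-s=\lceil(r-2)k/(r-1)\rceil$, so $\lambda=(r-1)(s+1)-k$ lies in $[r-1]$ and satisfies $k+\lambda\equiv 0\pmod{r-1}$. The plan is to prove matching upper and lower bounds of $(1+o(1))\binom{n}{t}/\bigl(\binom{k}{s}-m(k,s,\lambda)\bigr)$. For the upper bound, let $\mathcal{F}\subseteq\binom{[n]}{k}$ be $r$-focal-free; for each $t$-subset $T\subseteq[n]$ put $d(T)=|\{A\in\mathcal{F}:T\subseteq A\}|$, and for each $A\in\mathcal{F}$ define the \emph{shared $s$-subset} family $\mathcal{S}(A):=\{A\setminus T : T\in\binom{A}{t},\,d(T)\geq 2\}\subseteq\binom{A}{s}$. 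Double counting pairs $(A,T)$ with $T\in\binom{A}{t}$ yields
\[
\sum_{A\in\mathcal{F}}\bigl(\tbinom{k}{s}-|\mathcal{S}(A)|\bigr)\;=\;|\{T:d(T)=1\}|\;\leq\;\binom{n}{t},
\]
so it suffices to establish the averaged bound $\sum_{A\in\mathcal{F}}|\mathcal{S}(A)|\leq m(k,s,\lambda)|\mathcal{F}|+o\bigl(|\mathcal{F}|\binom{k}{s}\bigr)$, which yields $|\mathcal{F}|\leq(1+o(1))\binom{n}{t}/\bigl(\binom{k}{s}-m(k,s,\lambda)\bigr)$.

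The structural crux is that for all but $o(|\mathcal{F}|)$ edges $A$, $\mathcal{S}(A)$ contains no $\lambda$ pairwise disjoint members. Suppose $B_1,\dots,B_\lambda\in\mathcal{S}(A)$ are pairwise disjoint; pick for each $i$ some $A_i\in\mathcal{F}\setminus\{A\}$ with $A\setminus B_i\subseteq A_i$. The $A_i$'s are automatically distinct, since $A_i=A_j$ would force $A_i\supseteq(A\setminus B_i)\cup(A\setminus B_j)=A\setminus(B_i\cap B_j)=A$, contradicting $A_i\neq A$. The $\lambda$ deletions $A\setminus A_i\subseteq B_i$ are then pairwise disjoint nonempty subsets of $A$ occupying at most $\lambda s$ elements, leaving a region $L:=A\setminus\bigcup_i B_i$ of size $k-\lambda s=(r-1-\lambda)(s+1)$. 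Completing a forbidden $r$-focal structure at $A$ requires $r-1-\lambda$ further pairwise disjoint nonempty deletions inside $L$, i.e.\ edges of the form $(A\setminus D_j)\cup E_j\in\mathcal{F}$ where $D_1,\dots,D_{r-1-\lambda}$ partition $L$ into $(s+1)$-sized parts and $E_j\subseteq[n]\setminus A$ has size $s+1$. For typical $A$, the density of $\mathcal{F}$ (compare $|\mathcal{F}|=\Theta(n^t)$ to the $\Theta(n^{s+1})$ possible choices of $E_j$ per $D_j$) forces such filler edges to exist via an averaging/pigeonhole argument, so the focal-free hypothesis rules out the assumed $\lambda$-matching and gives $|\mathcal{S}(A)|\leq m(k,s,\lambda)$ by the definition of $m$; a cleaning/regularization step absorbs the sparse exceptional edges into the $o(\cdot)$ term.

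For the lower bound, let $\mathcal{M}^*\subseteq\binom{[k]}{s}$ be an extremal Erd\H{o}s matching family with $|\mathcal{M}^*|=m(k,s,\lambda)$ and $\mathcal{N}:=\binom{[k]}{s}\setminus\mathcal{M}^*$, so $|\mathcal{N}|=\binom{k}{s}-m(k,s,\lambda)$. Using R\"odl's nibble (or the finer combinatorial design existence results invoked later in the paper for the exact-value analysis), construct $\mathcal{F}\subseteq\binom{[n]}{k}$ of size $(1-o(1))\binom{n}{t}/|\mathcal{N}|$ admitting, for each $A\in\mathcal{F}$, a labeling by $[k]$ under which $\mathcal{S}(A)\subseteq\mathcal{M}^*$; focal-freeness then follows because an $r$-focal substructure at $A$ would produce $\lambda$ pairwise disjoint $s$-supersets of its small deletions inside $\mathcal{S}(A)\subseteq\mathcal{M}^*$, contradicting the choice of $\mathcal{M}^*$. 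The main obstacle is the upper bound's extension lemma: making precise the ``for typical $A$'' clause demands a delicate averaging over all configurations of $(B_1,\dots,B_\lambda,L)$ inside $A$, combined with a cleaning argument that shows only $o(|\mathcal{F}|)$ edges $A$ can fail to admit the filler extension, ultimately leveraging that $|\mathcal{F}|$ is close to the conjectured maximum so that the requisite ``filler'' supersets of each $(A\setminus D_j)$-type $(t-1)$-set are abundant in $\mathcal{F}$.
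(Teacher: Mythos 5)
Your overall strategy matches the paper's: an extremal double-counting argument for the upper bound, and a packing-based construction driven by a family on $[k]$ with no $\lambda$ disjoint $(k-t)$-sets for the lower bound. Your $\mathcal{S}(A)$ is exactly the complementary view of the paper's ``non-own $t$-subsets,'' and your count $\sum_A\bigl(\binom{k}{t}-|\mathcal{S}(A)|\bigr)\le\binom{n}{t}$ is the same inequality the paper proves (more carefully, without the $o(1)$ loss). However, there are two places where your sketch departs from what actually makes the argument work.

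For the upper bound, the crux is indeed to complete $B_1,\dots,B_\lambda$ to an $r$-focal structure by finding $r-1-\lambda$ further non-own $(t-1)$-subsets $A\setminus D_j$. But the justification you give --- ``density of $\mathcal{F}$ forces filler edges via averaging/pigeonhole,'' supported by the comparison $|\mathcal{F}|=\Theta(n^t)$ versus $\Theta(n^{s+1})$ choices of $E_j$ --- is not the right mechanism (and the exponent comparison can even point the wrong way, e.g.\ $r=3$, $k$ even gives $t=s<s+1$). What actually closes the argument is much cleaner: each own $(t-1)$-subset is own for a unique edge, so at most $\binom{n}{t-1}=o\bigl(n^t\bigr)$ edges $A$ possess \emph{any} own $(t-1)$-subset; for every other $A$, \emph{every} $(t-1)$-subset of $A$ is non-own, so \emph{any} partition of $L$ into $(s+1)$-blocks produces the required fillers, and Observation~\ref{obs:r-focal-hypergraph}(i) applies. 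This is precisely the paper's $\mathcal{F}_0/\mathcal{F}_1$ split (and the weaker bound in Remark~\ref{remark-hypergraph}). Your ``cleaning/regularization'' remark gestures at this, but the density heuristic you invoke in its place does not, by itself, yield the for-all statement you need about a fixed $A$.

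The lower bound has a genuine gap. You propose building $\mathcal{F}$ ``using R\"odl's nibble''; but R\"odl's nibble only gives a near-perfect packing of edge-disjoint copies, and edge-disjointness does \emph{not} control vertex intersections. Two copies can share up to $k-1$ vertices (as long as no $t$-edge of the copy lies in both), and then a deletion $V(\mathcal{F}_r)\setminus V(\mathcal{F}_i)$ can be tiny, so you never get the forced $\lambda$ intersections of size exactly $t$ that would land in your $\mathcal{M}^*$ and yield the contradiction. The tool you actually need is Frankl and F\"uredi's \emph{induced} packing theorem (Lemma~\ref{lem-frankl}), which additionally guarantees $|V(\mathcal{F}_i)\cap V(\mathcal{F}_j)|\le t$, and that a size-$t$ intersection is never an edge of either copy. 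The first property forces every deletion to have size $\ge k-t$, whence by the identity~\eqref{eq:lambda} at least $\lambda$ of them have size exactly $k-t$; the second property then places the corresponding intersections in $\binom{V(\mathcal{F}_r)}{t}\setminus\mathcal{F}_r$, whose complements recover $\lambda$ disjoint edges of the matching-free family $\mathcal{G}$ --- the contradiction. Without the induced property both steps fail, so citing R\"odl's nibble (or Keevash-type design existence, which also doesn't give induced packings of a proper subhypergraph $\mathcal{F}\subsetneq\binom{[k]}{t}$) does not suffice.
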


\noindent Note that the upper bound of Theorem~\ref{thm:main-hypergraph}, proved in Theorem~\ref{thm-hypergraphs-upperbound}, improves the upper bound in \eqref{thm-alon-onesided}, since by a result of Frankl \cite{Frankl-upper-bound}, $m(n,s,\lambda) \leq (\lambda-1)\binom{n-1}{s-1}$. 

Moreover, for $\lambda=1$, it is clear that $m(k,\lfloor\frac{k}{r-1}\rfloor,1)=0$; in this case we can, in fact, prove $f_r(n,k)\le\binom{n}{\lceil\frac{(r-2)k}{r-1}\rceil}/\binom{k}{\lceil\frac{(r-2)k}{r-1}\rceil}$. 
An $(n,k,t)$-packing $\mathcal{P}\subseteq\binom{[n]}{k}$ is said to be an {\it $(n,k,t)$-design} if $|\mathcal{P}|=\binom{n}{t}/\binom{k}{t}$. Combining with the known results on the existence of combinatorial designs (see \cite{colbourn2007crc,keevash2014existence}), we are able to determine the exact values of $f_r(n,k)$ for infinitely many parameters.

\begin{proposition}\label{prop:exact-hypergraph}
    Let $r\ge 3$, $r-1 \mid k+1$, and $t=\lceil\frac{(r-2)k}{r-1}\rceil$. Let $n\ge\max\{k,n_0\}$, where $n_0=\binom{k}{k-t} t + t - 1$. If there exists an $(n,k,t)$-design, then $f_r(n,k)=\binom{n}{t}/\binom{k}{t}$. In particular, for every sufficiently large $n$ that satisfies $\binom{k-i}{t-i}\mid \binom{n-i}{t-i}$ for every $0\le i\le t-1$, we have $$f_r(n,k)=\binom{n}{\lceil\frac{(r-2)k}{r-1}\rceil} \bigg/ \binom{k}{\lceil\frac{(r-2)k}{r-1}\rceil}.$$
\end{proposition}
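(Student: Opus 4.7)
The plan is to sandwich $f_r(n,k)$ between matching lower and upper bounds that are already essentially isolated in the exposition, and then to deduce the ``in particular'' clause from a deep existence theorem for combinatorial designs.

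For the lower bound I would use the observation recorded just before Theorem~\ref{thm:main-hypergraph}: every $(n,k,t)$-packing is automatically $r$-focal-free. Since by definition an $(n,k,t)$-design is an $(n,k,t)$-packing of size exactly $\binom{n}{t}/\binom{k}{t}$, its existence under the hypothesis of the proposition immediately gives $f_r(n,k)\ge\binom{n}{t}/\binom{k}{t}$.

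For the upper bound, I would first observe that the hypothesis $r-1\mid k+1$ forces the unique $\lambda\in[r-1]$ with $k+\lambda\equiv 0\pmod{r-1}$ to be $\lambda=1$, whence $m(k,\lfloor k/(r-1)\rfloor,1)=0$. The remark immediately following Theorem~\ref{thm:main-hypergraph} promises that in this $\lambda=1$ regime a non-asymptotic bound $f_r(n,k)\le\binom{n}{t}/\binom{k}{t}$ is available, and I would quote this strengthening directly; the hypothesis $n\ge\max\{k,n_0\}$ is precisely the range over which that exact-bound argument applies. Matching this with the lower bound yields the first assertion.

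For the ``in particular'' conclusion I would appeal to the celebrated existence theorem for combinatorial designs of Keevash~\cite{keevash2014existence} (see also the references in \cite{colbourn2007crc}), which guarantees that, for all sufficiently large $n$, the listed divisibility conditions $\binom{k-i}{t-i}\mid\binom{n-i}{t-i}$ for $0\le i\le t-1$ are not only necessary but also sufficient for the existence of an $(n,k,t)$-design. Plugging this back into the first assertion gives the claimed exact value of $f_r(n,k)$.

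The main obstacle is genuinely the exact (non-asymptotic) upper bound in the $\lambda=1$ case, since Theorem~\ref{thm:main-hypergraph} by itself only delivers an asymptotic $(1+o(1))\binom{n}{t}/\binom{k}{t}$. Proving it cleanly likely requires refining the shadow- or link-counting argument behind Theorem~\ref{thm:main-hypergraph} using the fact that for $\lambda=1$ no nontrivial matching in the link can exist, and the explicit threshold $n_0=\binom{k}{k-t}t+t-1$ should emerge from ensuring that a pigeonhole or averaging step over $t$-subsets becomes effective.
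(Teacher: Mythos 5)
Your proposal is correct and follows essentially the same route as the paper: the lower bound comes from observing that a design is in particular a packing, which (as noted in the introduction) is automatically $r$-focal-free; the upper bound is Theorem~\ref{thm-hypergraphs-upperbound} specialized to $\lambda=1$, where $m(k,k-t,1)=0$ collapses the denominator to $\binom{k}{t}$ and the threshold $n_0$ (note $\binom{k}{k-t}=\binom{k}{t}$) is exactly the one appearing there; and the ``in particular'' clause is Keevash's existence theorem. The only thing worth flagging is that your closing paragraph treats the exact upper bound as an open obstacle to be re-derived, when in fact the paper has already proved it in full generality as Theorem~\ref{thm-hypergraphs-upperbound}, so the proposition really is the two-line deduction you sketch in your first three paragraphs.
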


As counterparts of focal-free uniform hypergraphs, focal-free codes are also of interest and were systematically studied by Alon and Holzman~\cite{alon2023near}. 
\begin{definition}\label{def:focal}
    Let $q\geq 2$ be an integer. A family $\bm x^0,\bm x^1,\dots,\bm x^{r-1}$ of $r$ distinct vectors in $[q]^n$ is said to be an {\it $r$-focal code} with focus $\bm x^0$ if for every coordinate $i\in [n]$, we have
\begin{align*}
    |\{j\in[r-1]:x_i^j=x_i^0\}|\ge r-2.
\end{align*}
\noindent A code $\mathcal{C}\subseteq [q]^n$ is {\it $r$-focal-free} if it does not contain an $r$-focal code as a subset.
\end{definition}

Let $f_r^{q}(n)$ denote the maximum cardinality of an $r$-focal-free code in $[q]^{n}$. Alon and Holzman~\cite{alon2023near} proved that 
\begin{equation}\label{thm:alon-main}
    c_r\left( \frac{q}{((q-1)(r-1)+1)^{1/(r-1)}} \right)^n\le f_r^{q}(n) \leq (r-1)q^{\lceil\frac{(r-2)n}{r-1}\rceil},
\end{equation}

\noindent where $c_r>0$ is a positive constant that depends only on $r$. Moreover, for any prime power $q\geq n$, they proved a better lower bound:
\begin{equation}\label{eq:AH-lower-bound}
    f_r^{q}(n) \geq q^{\lceil\frac{(r-2)n}{r-1}\rceil}.
\end{equation}

\noindent Thus, the above results showed that for fixed $r,n$ and sufficiently large $q$, we have 
\begin{equation}\label{eq:focal-free-code-exponent}
    f_r^{q}(n) = \Theta_{n,r}( q^{\lceil\frac{(r-2)n}{r-1}\rceil}).
\end{equation}

\noindent Therefore, it remains a natural problem to determine whether the limit $\lim_{q\to\infty}f_r^{q}(n)/q^{\lceil\frac{(r-2)n}{r-1}\rceil}$ exists. 

Our second main result improves upon \eqref{eq:focal-free-code-exponent} by explicitly determining the value of the above limit.
\begin{theorem}\label{thm:main-code}
	For all fixed $r\geq 3$ and $n\geq 2$, we have
	\[
	\lim\limits_{q\to\infty} f_r^{q}(n) \big/ q^{\lceil\frac{(r-2)n}{r-1}\rceil}
	= \frac{\binom{n}{\lceil\frac{(r-2)n}{r-1}\rceil}}{\binom{n}{\lceil\frac{(r-2)n}{r-1}\rceil}-m(n,\lfloor\frac{n}{r-1}\rfloor,\lambda)},
	\]
	where $\lambda\in [r-1]$ is the unique integer that satisfies $n+\lambda \equiv 0 \pmod{r-1}$.
\end{theorem}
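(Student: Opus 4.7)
The plan is to adapt, \emph{mutatis mutandis}, the proof scheme of Theorem~\ref{thm:main-hypergraph}, with projections onto $t$-subsets of the coordinate set $[n]$ playing the role of $t$-subsets of an edge in the hypergraph setting. Fix $t=\lceil(r-2)n/(r-1)\rceil$, $s=n-t=\lfloor n/(r-1)\rfloor$, $N=\binom{n}{t}$, and $M=m(n,s,\lambda)$. For the upper bound, let $\mathcal{C}\subseteq[q]^n$ be any $r$-focal-free code, and for every pair $(T,\bm{y})\in\binom{[n]}{t}\times[q]^T$ set $d_{T,\bm{y}}=|\{\bm{x}\in\mathcal{C}:\bm{x}|_T=\bm{y}\}|$. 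For each $\bm{x}\in\mathcal{C}$ introduce
\[
\mathrm{bad}^{*}(\bm{x})=\Bigl\{S\in\binom{[n]}{s}:\,\exists\,\bm{x}'\in\mathcal{C}\setminus\{\bm{x}\}\text{ with }D(\bm{x},\bm{x}')\subseteq S\Bigr\}.
\]
A direct double count of the pairs $(\bm{x},T)$ for which $\bm{x}$ is the unique codeword projecting to $\bm{x}|_T$ on $T$, combined with the trivial bound $\sum_{T,\bm{y}}\mathbf{1}[d_{T,\bm{y}}\ge 1]\leq Nq^t$, yields
\[
|\mathcal{C}|\bigl(N-\max_{\bm{x}\in\mathcal{C}}|\mathrm{bad}^{*}(\bm{x})|\bigr)\leq Nq^t,
\]
so the upper bound reduces to showing that each $\mathrm{bad}^{*}(\bm{x})$, viewed as an $s$-uniform hypergraph on $[n]$, has matching number strictly less than $\lambda$; by the definition of $m(n,s,\lambda)$ this forces $|\mathrm{bad}^{*}(\bm{x})|\le M$.

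To establish the matching-number claim, suppose for contradiction that pairwise disjoint $S_1,\ldots,S_\lambda\in\mathrm{bad}^{*}(\bm{x})$ exist with witnesses $\bm{x}^{(i)}\in\mathcal{C}\setminus\{\bm{x}\}$ satisfying $D_i:=D(\bm{x},\bm{x}^{(i)})\subseteq S_i$. The $\bm{x}^{(i)}$ are automatically pairwise distinct and have pairwise disjoint nonempty differences from $\bm{x}$. When $\lambda=r-1$, the family $\{\bm{x},\bm{x}^{(1)},\ldots,\bm{x}^{(r-1)}\}$ is already a forbidden $r$-focal code. When $\lambda<r-1$, one must produce $r-1-\lambda$ additional witnesses whose differences from $\bm{x}$ are nonempty, pairwise disjoint, and contained in the residual $[n]\setminus\bigcup_i D_i$ of size at least $(r-1-\lambda)\lceil n/(r-1)\rceil$. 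This extension step is the principal technical obstacle; I expect the argument to exploit the congruence $n+\lambda\equiv 0\pmod{r-1}$, which forces the residual to decompose into $r-1-\lambda$ blocks of exactly the right size, combined with a counting or averaging argument inside $\mathcal{C}$ producing the required auxiliary witnesses.

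For the matching lower bound I plan to adapt the construction behind Proposition~\ref{prop:exact-hypergraph}. Begin with an orthogonal array of strength $t$ on $[q]^n$ (whose existence for sufficiently large prime power $q$ supplies $q^t$ codewords with pairwise distinct $t$-projections, automatically $r$-focal-free); then augment by adjoining, for each $S$ in a fixed extremal $\lambda$-matching-free family $\mathcal{M}\subseteq\binom{[n]}{s}$ of size $M$, additional codewords that differ from the scaffold only on coordinates in $S$. Since $\mathcal{M}$ contains no $\lambda$-matching while any hypothetical $r$-focal configuration would require $r-1\ge\lambda$ pairwise disjoint nonempty differences among the augmentation, no $r$-focal code can arise, and the augmented code exceeds the scaffold by a factor of $N/(N-M)$ up to lower-order error. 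Matching the upper bound as $q\to\infty$ completes the proof.
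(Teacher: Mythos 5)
Your upper-bound strategy has a genuine gap, and it is precisely at the step you flag as ``the principal technical obstacle.'' The claim that $|\mathrm{bad}^{*}(\bm{x})|\le M$ for \emph{every} $\bm{x}\in\C$ — equivalently, that the $s$-uniform hypergraph $\mathrm{bad}^{*}(\bm{x})$ has matching number $<\lambda$ for every codeword — is simply false. Already in the cleanest case $\lambda=1$ (so $M=0$), the claim would assert that every $t$-projection of every codeword is own, i.e., that all $t$-projections of $\C$ are pairwise distinct. But take $q=2$, $r=3$, $n=3$, $t=2$ and $\C=\{000,001\}$: this is trivially $3$-focal-free ($|\C|<r$), yet $000$ and $001$ agree on coordinates $\{1,2\}$, so $\{3\}\in\mathrm{bad}^{*}(000)$ and $|\mathrm{bad}^{*}(000)|\ge 1>M$. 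The fix is not an averaging argument that conjures the missing $r-1-\lambda$ witnesses; rather, the paper partitions $\C=\C_0\cup\C_1$ according to whether $\bm{x}$ has an own $(t-1)$-subsequence. For $\bm{x}\in\C_0$, \emph{every} $(t-1)$-subsequence is non-own, so the extension witnesses $T_{\lambda+1},\dots,T_{r-1}$ of size $n-t+1$ (whose complements have size $t-1$) are non-own for free, yielding $|\mathrm{bad}^{*}(\bm{x})|\le M$ on $\C_0$ only (Lemma~\ref{lem:no-own-k-1-implies-many-own-k}). The codewords in $\C_1$ are handled by a separate term in the double count: each contributes a disjoint collection of own $(t-1)$-subsets, and for $q$ above an explicit threshold these carry at least as much weight as the $\C_0$ contribution, giving $\binom{n}{t}q^t\ge |\C|(\binom{n}{t}-M)$. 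Without this split, the double-count inequality $|\C|(N-\max_x|\mathrm{bad}^*(\bm x)|)\le Nq^t$ is not useful.

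For the lower bound you propose an orthogonal-array scaffold augmented by perturbations supported on a fixed $\lambda$-matching-free family $\mathcal{M}$. This is a genuinely different route from the paper, which instead applies a faithful induced $\mF$-packing in the complete $n$-partite $t$-uniform hypergraph $\mH_n^{(t)}(q)$ (Lemma~\ref{lem-induced-packing}, the multipartite variant of Frankl--F\"uredi due to Liu and Shangguan), with $\mF=\binom{[n]}{t}\setminus\mG'$ where $\mG'$ is the complement family of a maximum $\lambda$-matching-free $(n-t)$-graph. As stated, your augmentation is not well-defined enough to verify: an OA scaffold already ``uses up'' all $\binom{n}{t}q^t$ ordered $t$-projections as own projections, so the extras must collide with scaffold projections in a tightly controlled way; moreover the claim that any $r$-focal configuration ``would require $r-1\ge\lambda$ pairwise disjoint nonempty differences among the augmentation'' does not address configurations whose focus is a scaffold codeword or which mix scaffold and augmentation witnesses, nor configurations entirely within the scaffold modified by a single perturbation. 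The packing-lemma approach avoids these case analyses entirely: conditions (iv) and (v) of an induced packing enforce exactly that any two codewords share at most $t$ coordinates and, when they share exactly $t$, the common $t$-set is forbidden for both, which together with \eqref{eq:lambda2} and the choice of $\mG$ rules out $r$-focal codes via Observation~\ref{obs:r-focal-code}(ii). If you want an explicit construction, this is an open problem (see the paper's concluding remarks); for the asymptotic statement of \cref{thm:main-code}, the black-box lemma is the right tool.
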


Similarly to \cref{prop:exact-hypergraph}, in the theorem below we obtain some exact results on $f_r^{q}(n)$. Moreover, compared with \cref{prop:exact-hypergraph}, in \cref{prop-exact}, the requirements on the parameters are more flexible. 

\begin{theorem}\label{prop-exact}
	Let $q\geq r-1\geq 2$ and let $q= p_1^{e_1} \cdots p_s^{e_s}$ be the canonical integer factorization of $q\geq 2$, where $p_1,\ldots,p_s$ are distinct primes, $e_1,\ldots,e_s$ are positive integers, and $p_1^{e_1}<\cdots<p_s^{e_s}$. 
	If $r-1\mid n+1$ and $2r-3<n\leq p_1^{e_1}+1$, then \[
	f_r^{q}(n) = q^{\lceil\frac{(r-2)n}{r-1}\rceil}.
	\]
\end{theorem}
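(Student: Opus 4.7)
The plan is to combine a direct Reed--Solomon--based construction with the exact upper bound given by the $\lambda=1$ specialization of \cref{thm:main-code}. Since $r-1\mid n+1$, write $n=(r-1)\ell-1$; the hypothesis $n>2r-3$ is equivalent to $\ell\geq 3$, and one checks that $\lceil(r-2)n/(r-1)\rceil=(r-2)\ell$, $\lfloor n/(r-1)\rfloor=\ell-1$, and $\lambda=1$. Thus $m(n,\ell-1,1)=0$, and the code analog of the refined $\lambda=1$ bound noted in the parenthetical immediately following \cref{thm:main-hypergraph} yields the exact upper bound $f_r^q(n)\leq q^{(r-2)\ell}$.

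For the matching lower bound, first record the elementary observation that any code $\mathcal{C}\subseteq[q]^n$ of minimum Hamming distance at least $\ell$ is automatically $r$-focal-free: if $\bm x^0,\ldots,\bm x^{r-1}$ were an $r$-focal configuration with focus $\bm x^0$, then by \cref{def:focal} the sets $D_j:=\{i\in[n] : x_i^j\neq x_i^0\}$ ($j\in[r-1]$) would be pairwise disjoint subsets of $[n]$, each of cardinality at least $\ell$, giving the contradiction $(r-1)\ell\leq\sum_{j=1}^{r-1}|D_j|\leq n=(r-1)\ell-1$. It therefore suffices to construct an MDS $[n,(r-2)\ell,\ell]$ code over an alphabet of size $q$.

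To produce such a code, glue together Reed--Solomon codes across the prime-power factors of $q$. For each $i\in[s]$, the inequality $n\leq p_1^{e_1}+1\leq p_i^{e_i}+1$ guarantees the existence of an MDS $[n,(r-2)\ell,\ell]$ code $C_i$ over $\mathbb{F}_{p_i^{e_i}}$, e.g., an extended Reed--Solomon code. Fix any bijection $[q]\longleftrightarrow\prod_{i=1}^{s}[p_i^{e_i}]$, let $\pi_i:[q]\to[p_i^{e_i}]$ denote the corresponding projections, and define
\[
    C := \bigl\{\bm x\in[q]^n : (\pi_i(x_1),\ldots,\pi_i(x_n))\in C_i \text{ for every } i\in[s]\bigr\}.
\]
Then $|C|=\prod_{i=1}^{s}|C_i|=\prod_{i=1}^{s}(p_i^{e_i})^{(r-2)\ell}=q^{(r-2)\ell}$, and any two distinct codewords of $C$ project onto distinct codewords of some $C_{i_0}$ and hence differ in at least $\ell$ coordinates of $[q]^n$; so the minimum distance of $C$ is at least $\ell$ and $C$ is $r$-focal-free.

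The principal obstacle is the upper bound, not the construction. While \cref{thm:main-code} gives $f_r^q(n)=(1+o(1))q^{(r-2)\ell}$ as $q\to\infty$, the sharp inequality $f_r^q(n)\leq q^{(r-2)\ell}$ required here (uniformly in $q$) needs the non-asymptotic $\lambda=1$ refinement, obtained by tightening the projection/averaging argument behind \cref{thm:main-code} in the same spirit as the remark tightening \cref{thm:main-hypergraph}; the strict inequality $n>2r-3$ is presumably what provides the requisite slack for that refinement to close.
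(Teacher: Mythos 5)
Your lower bound is essentially the paper's construction in different clothing: the paper invokes the existence of an orthogonal array $OA(t,n,q)$ for $t=\lceil\frac{(r-2)n}{r-1}\rceil$, which under $n\leq p_1^{e_1}+1$ is exactly the CRT-style product of extended Reed--Solomon-type MDS codes over $\mathbb{F}_{p_i^{e_i}}$ that you write out explicitly. Both give a code of size $q^t$ with minimum distance $>\lfloor n/(r-1)\rfloor$, hence $r$-focal-free by \cref{lem:ECC-to-focal-free}. That part is fine.

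The upper bound is where your argument genuinely breaks. You claim that ``the code analog of the refined $\lambda=1$ bound'' yields $f_r^q(n)\le q^{(r-2)\ell}$, and when you notice this is the sticking point you propose to obtain the needed non-asymptotic bound ``by tightening the projection/averaging argument behind \cref{thm:main-code} in the same spirit as the remark tightening \cref{thm:main-hypergraph}.'' But that averaging argument is precisely \cref{thm-codes-upperbound}, and for $\lambda=1$ its clean form $f_r^q(n)\le q^{t}$ is only established when $q\ge\frac{t}{n-t+1}\binom{n}{t}$, which is $2^{\Theta(n)}$ — the paper even points this out explicitly right before \cref{thm-exact-upper}. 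The statement you are proving requires only $q\ge r-1$, which is far too small for the averaging to close the gap between $\cup_{\bm x\in\mathcal{C}_1}\mathcal{B}_{\bm x}$ and a factor of $\binom{n}{t}-m(n,n-t,\lambda)$ per codeword. There is no ``slack'' to be extracted from $n>2r-3$ here: that hypothesis only enters the lower bound (it ensures $\ell\ge 3$, more than enough for the OA to exist), and the paper's upper bound argument uses it nowhere. The missing ingredient is \cref{thm-exact-upper}, which is proved by a quite different extremal argument: for $d=(n+1)/(r-1)$ one considers, for each $S\in\binom{[n]}{n-d}$, the set $U_S$ of codewords whose projection to $S$ is an own subsequence; one takes $S$ maximizing $|U_S|$, uses \cref{obs:r-focal-code} (i) with the partition $[n]=T_1\cup\cdots\cup T_{r-1}$ into blocks of sizes $d,\dots,d,d-1$ to cover $\mathcal{C}$ by the $U_{[n]\setminus T_i}$, bounds $|U_{[n-d+1]}|$ via a one-coordinate extension estimate, and then the pigeonhole principle (crucially using $q-1\ge r-2$) produces an $S'$ with $|U_{S'}|>|U_S|$, a contradiction as soon as $|\mathcal{C}|\ge q^{n-d+1}+1$. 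Your proposal has no substitute for this maximality-and-extension argument, so the upper bound half of the theorem is unproved.
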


We explain the main idea of our proofs here. The upper bounds on the limits in Theorems \ref{thm:main-hypergraph} and \ref{thm:main-code} are essentially proved by double counting (see Theorems \ref{thm-hypergraphs-upperbound} and \ref{thm-codes-upperbound} below). For $q\ge r-1$ and $r-1\mid n+1$, we apply a slightly more sophisticated counting argument and prove a clean upper bound for $f_r^q(n)$ (without the lower order term, see \cref{thm-exact-upper}), which eventually yields the exact result in \cref{prop-exact}. 

The lower bounds on the limits in Theorems \ref{thm:main-hypergraph} and \ref{thm:main-code} are inspired by the aforementioned work of R\"odl \cite{Rodl-nibble}, and its further developments (see \cite{frankl1987colored,Frankl-Rodl-matching,liu2024near,Pippenger-Spencer-asymptotic}). In particular, the lower bound construction of \cref{thm:main-hypergraph} (see \cref{thm-hypergraphs-lowerbound}) is based on a powerful result of Frankl and F\"uredi \cite{frankl1987colored} on the existence of near-optimal induced hypergraph packings; and the lower bound in \cref{thm:main-code} (see \cref{thm-codes-lowerbound}) applies a variant of Frankl and F\"uredi's result, recently obtained by Liu and Shangguan \cite{liu2024near}, to induced packings in multi-partite hypergraphs, where the packings should respect the vertex partition of the host multi-partite hypergraph.   

\paragraph{Outline of the paper.} 
For focal-free uniform hypergraphs, we will prove \cref{thm:main-hypergraph} in \cref{section-hypergraphs}, where the upper and lower bounds of the limit are proved in Sections~\ref{subsection-hypergraphs-upper} and \ref{subsection-hypergraphs-lower}, respectively; furthermore, a short proof of \cref{prop:exact-hypergraph} is also included in \cref{subsection-hypergraphs-upper}. For focal-free codes, since the proof of \cref{thm:main-code} is very similar to that of \cref{thm:main-hypergraph}, we will sketch its proof in \cref{sec:main-code}; however, the proofs of \cref{prop-exact} and \cref{prop:exact-hypergraph} are quite different, and we will prove \cref{prop-exact} in \cref{subsection-codes-exact}. Lastly, we will conclude this paper in Section~\ref{section-conclusion} with some open questions. 

\paragraph{Related work.} We will end this section by mentioning some related work. We remark that Alon and Holzman's study of focal-free hypergraphs and codes was motivated by the study of near-sunflowers, which is a variant of the well-known combinatorial object sunflowers. 

A family of $r$ distinct subsets of $[n]$ is an {\it $r$-near-sunflower} if every $i\in [n]$ belongs to either $0,1,r-1$ or $r$ of the members in this family. 
It is easy to see that if $\mathcal{H}\subseteq 2^{[n]}$ is $r$-near-sunflower-free then it is also $r$-focal-free. Therefore, the $q=2$ case of \eqref{thm:alon-main} shows that every $r$-near-sunflower-free family $\mH \subseteq 2^{[n]}$ has cardinality at most $c^n$, where $c<2$ is a constant depending only on $r$. This in fact proved the Erd\H{o}s--Szemer\'edi-type conjecture for near-sunflowers (which is weaker than the Erd\H{o}s--Szemer\'edi conjecture for sunflowers). Alon and Holzman further posed an Erd\H{o}s--Rado-type conjecture for near-sunflowers (which is again weaker than the Erd\H{o}s--Rado conjecture for sunflowers), and it is still open. For more details on the Erd\H{o}s--Rado and Erd\H{o}s--Szemer\'edi conjectures for sunflowers, see, e.g. \cite{alon2012sunflowers,alweiss2021improved,deuber1997intersection,erdos1960intersection,erdos1978combinatorial,hegedHus2018improved,naslund2017upper}.

As illustrated in \cite{alon2023near}, focal-free hypergraphs and codes are also closely related to cover-free families~\cite{erdHos1985families,frankl1987colored} and frameproof codes~\cite{blackburn2003frameproof,Chee-Zhang-fpc,liu2024near}, which were extensively studied in combinatorics and coding theory. Indeed, when $r=3$, $3$-focal-free hypergraphs are equivalent to $2$-cover-free families, and $3$-focal-free codes are equivalent to $2$-frameproof codes. The interested reader is referred to \cite{alon2023near} for a more detailed discussion on the relation of focal-free  hypergraphs and codes and various other combinatorial objects. 

\section{Focal-free hypergraphs}\label{section-hypergraphs}

The goal of this section is to present the proof of \cref{thm:main-hypergraph}.
We will use the following definition. Let $\mathcal{F}\subseteq 2^{[n]}$ be a hypergraph and $A\in\mathcal{F}$ be an edge. A subset $T\subseteq A$ is called an {\it own subset} of $A$ (with respect to $\F$) if for every $B \in \F\setminus \{A\}$, we have $T\nsubseteq B$; otherwise, $T$ is called a {\it non-own subset} of $A$ (with respect to $\F$).

The following observation presents sufficient and necessary conditions for the existence of an $r$-focal hypergraph. It will be very useful in our proof of \cref{thm:main-hypergraph}.

\begin{observation}\label{obs:r-focal-hypergraph}
    Let $\mathcal{F}$ be a 
    hypergraph with $|\mathcal{F}|\ge r$. Then the following hold:
    \begin{itemize}
        \item [{\rm (i)}] If $A\in\mathcal{F}$ admits a partition $A=T_1\cup\cdots\cup T_{r-1}$ such that for each $i\in[r-1]$, $T_i\neq\emptyset$ and $A\setminus T_i$ is a non-own subset of $A$, 
        then $\mathcal{F}$ contains an $r$-focal hypergraph with focus $A$.

        \item [{\rm (ii)}] If $A,A_1,\ldots,A_{r-1}\in\mathcal{F}$ form an $r$-focal hypergraph with focus $A$, then the $r-1$ members of $\{A\setminus A_i:i\in[r-1]\}$ are pairwise disjoint subsets of $A$.
    \end{itemize}
\end{observation}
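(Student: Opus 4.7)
My plan is to treat (ii) first, as it is a direct unwinding of the focal condition, and then to construct explicit witnesses for (i). For (ii), the focal condition says that every $x\in A$ satisfies $|\{i\in[r-1]:x\notin A_i\}|\le 1$, so each $x\in A$ belongs to at most one of the sets $A\setminus A_1,\ldots,A\setminus A_{r-1}\subseteq A$; this is exactly the required pairwise disjointness.

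For (i), I would use the non-own hypothesis to select, for each $i\in[r-1]$, some $A_i\in \F\setminus\{A\}$ with $A\setminus T_i\subseteq A_i$. Because $T_1,\ldots,T_{r-1}$ partition $A$, every $x\in A$ lies in a unique block $T_{j(x)}$; therefore $x\in A\setminus T_i\subseteq A_i$ for every $i\ne j(x)$, yielding $|\{i\in[r-1]:x\in A_i\}|\ge r-2$ as required by the focal condition with focus $A$.

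The only delicate point is to verify that $A,A_1,\ldots,A_{r-1}$ are pairwise distinct. Each $A_i\ne A$ by construction. If instead $A_i=A_{i'}$ for some distinct $i,i'\in[r-1]$, then $A_i\supseteq(A\setminus T_i)\cup(A\setminus T_{i'})=A\setminus(T_i\cap T_{i'})=A$, using disjointness of the blocks. In the $k$-uniform setting that governs this section, $|A_i|=|A|$ then forces $A_i=A$, contradicting $A_i\in \F\setminus\{A\}$. This distinctness step, which uses both uniformity and the nonemptyness of each $T_i$, is the one place where anything beyond unpacking definitions is needed, and I expect it to be the only mild obstacle in an otherwise routine argument.
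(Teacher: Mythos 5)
Your proof is correct, and the paper in fact omits the argument entirely (``we omit its proof''), so there is no official version to compare against. Part (ii) is a one-line unwinding of the focal condition, which you handle cleanly. Part (i) is the natural construction: pick a witness $A_i\in\F\setminus\{A\}$ with $A\setminus T_i\subseteq A_i$ for each $i$, and note that for every $x\in A$ the unique block $T_{j(x)}$ containing $x$ is the only index at which containment can fail, giving $|\{i\in[r-1]:x\in A_i\}|\ge r-2$.

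You are right that the distinctness check is the only point with content, and you are also right that it genuinely requires $k$-uniformity. As literally stated for an arbitrary hypergraph, Observation~\ref{obs:r-focal-hypergraph}(i) is false: take $r=4$, $A=\{1,2,3\}$, $T_i=\{i\}$, and $\F=\{A,\{1,2,3,4\},\{5\},\{6\}\}$. Each $A\setminus T_i$ is non-own (it lies in $\{1,2,3,4\}$) and $|\F|=4\ge r$, yet the only edge of $\F\setminus\{A\}$ meeting $A$ is $\{1,2,3,4\}$, so for every $x\in A$ and every choice of three distinct $A_1,A_2,A_3\in\F\setminus\{A\}$, the element $x$ belongs to at most one of them; hence $\F$ contains no $4$-focal hypergraph with focus $A$. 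Your appeal to uniformity (so that $A\subseteq A_i$ with $|A_i|=|A|$ forces $A_i=A$) is therefore not a convenience but exactly the hypothesis that makes (i) true, and it is in force everywhere the paper invokes this observation, namely in Lemma~\ref{lemma:hypergraphs-no own} and Theorem~\ref{thm-hypergraphs-lowerbound}, where $\F\subseteq\binom{[n]}{k}$. Your proof is thus both correct and more careful than the paper's dismissal of the observation as immediate.
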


\noindent Since the observation follows fairly straightforwardly from the definition of an $r$-focal hypergraph, we omit its proof. For later reference, we remark that \cref{obs:r-focal-hypergraph} (i) and (ii) will be used in the proofs of the upper and lower bounds of the limit in \cref{thm:main-hypergraph}, respectively.

Throughout this section, we will use the following notation. For fixed $r,k$, let $t:=\lceil\frac{(r-2)k}{r-1}\rceil$. 
Then $t=k-\lfloor\frac{k}{r-1}\rfloor$. Moreover, $\lambda\in [r-1]$ satisfies $k+\lambda \equiv 0 \pmod{r-1}$  if and only if 
\begin{align}\label{eq:lambda}
    \lambda(k-t)+(r-1-\lambda)(k-t+1)=k.
\end{align}

\subsection{The upper bound of $f_r(n,k)$}\label{subsection-hypergraphs-upper}

In this subsection, we will prove the following upper bound. 

\begin{theorem}\label{thm-hypergraphs-upperbound}
 

For $r\geq 3$ and $k\geq 2$, let $t=\lceil\frac{(r-2)k}{r-1}\rceil$. When $n\ge\max\{k,n_0\}$, where $n_0=\left(\binom{k}{t} - m(k, k-t, \lambda)\right) t + t - 1$, we have
\begin{equation}\label{eq-hypergraphs-upperbound}
		f_r(n,k)  \leq  \frac{\binom{n}{t}}{\binom{k}{t} - m(k,k-t,\lambda)},
	\end{equation}
	where $\lambda\in [r-1]$ is the unique integer that satisfies $k + \lambda\equiv 0\pmod{r-1}$.
\end{theorem}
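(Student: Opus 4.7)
The strategy is a double-counting argument on pairs $(A,S)$ with $A\in\mathcal{F}$ and $S\in\binom{A}{t}$. For each $A\in\mathcal{F}$, I call a $t$-subset $S\subseteq A$ \emph{own} (with respect to $\mathcal{F}$) if no other edge of $\mathcal{F}$ contains $S$, and write $o_A$ for the number of own $t$-subsets of $A$. The first observation is that any $S\in\binom{[n]}{t}$ can be own in at most one $A\in\mathcal{F}$ (otherwise $S$ would lie in two distinct edges and would be non-own for both), giving the trivial inequality $\sum_{A\in\mathcal{F}} o_A \le \binom{n}{t}$. The heart of the argument is then to establish the per-edge lower bound $o_A \ge \binom{k}{t} - m(k,k-t,\lambda)$; combining this with the display above yields $|\mathcal{F}|\,(\binom{k}{t}-m(k,k-t,\lambda))\le\binom{n}{t}$, which is exactly \eqref{eq-hypergraphs-upperbound}.

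To prove the per-edge bound, I would define $\mathcal{B}_A := \{T\in\binom{A}{k-t}: A\setminus T\text{ is non-own}\}$, so that $o_A=\binom{k}{t}-|\mathcal{B}_A|$, and then argue by contradiction: suppose $|\mathcal{B}_A|>m(k,k-t,\lambda)$ for some $A\in\mathcal{F}$. Then, by the definition of $m(\cdot,\cdot,\cdot)$, the $(k-t)$-uniform hypergraph $\mathcal{B}_A$ on the $k$-vertex set $A$ contains $\lambda$ pairwise disjoint edges $T_1,\ldots,T_\lambda$, and for each $i\le\lambda$ there is some $B_i\in\mathcal{F}\setminus\{A\}$ with $A\setminus T_i\subseteq B_i$. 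By identity~\eqref{eq:lambda}, the residual $R:=A\setminus\bigcup_{i\le\lambda}T_i$ has size $(r-1-\lambda)(k-t+1)$ and can be partitioned into $r-1-\lambda$ blocks $T_{\lambda+1},\ldots,T_{r-1}$ of size $k-t+1$, giving a partition of $A$ into $r-1$ nonempty parts. To invoke \cref{obs:r-focal-hypergraph}(i) and produce an $r$-focal hypergraph with focus $A$ (contradicting focal-freeness), I would need each $A\setminus T_i$ to be non-own; this is automatic for $i\le\lambda$, and for $i>\lambda$ it reduces, via the downward closure of non-ownness under subsets of $A$, to showing that each $(k-t+1)$-block $T_i\subseteq R$ contains some $(k-t)$-subset lying in $\mathcal{B}_A$.

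The main obstacle is exactly this last step: given the $\lambda$ disjoint edges in $\mathcal{B}_A$, one must select the partition of $R$ so that each block meets the restricted hypergraph $\mathcal{B}_A\cap\binom{R}{k-t}$. This is where the numerical hypothesis $n\ge n_0=(\binom{k}{t}-m(k,k-t,\lambda))\,t+t-1$ is used: $n_0$ is calibrated so that a greedy/pigeonhole extraction, combined with the lower bound $|\mathcal{B}_A|>m(k,k-t,\lambda)$ on the total number of ``bad'' $(k-t)$-subsets, is guaranteed to produce the required partition of $R$. I expect this extension step to require the most care in the full proof; once it is in place, \cref{obs:r-focal-hypergraph}(i) immediately delivers the $r$-focal hypergraph, contradicting the focal-freeness of $\mathcal{F}$ and completing the per-edge bound.

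Finally, \cref{prop:exact-hypergraph} falls out of the special case $\lambda=1$: then $m(k,k-t,1)=0$, so the per-edge bound forces $|\mathcal{B}_A|=0$ for every $A\in\mathcal{F}$, i.e.\ every $t$-subset of every edge is own, meaning $\mathcal{F}$ is an $(n,k,t)$-packing. Whenever an $(n,k,t)$-design exists, the packing bound $\binom{n}{t}/\binom{k}{t}$ is attained with equality, yielding the stated exact value of $f_r(n,k)$.
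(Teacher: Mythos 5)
Your overall double-counting skeleton is right as far as it goes (own $t$-subsets are pairwise disjoint across edges, so $\sum_A o_A\le\binom{n}{t}$), but the per-edge lower bound you rely on is false, and the gap you flagged yourself is not repairable by the hypothesis on $n$.

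Concretely, you claim $o_A\ge\binom{k}{t}-m(k,k-t,\lambda)$ for \emph{every} $A\in\mathcal{F}$. This cannot be true: take $r=3$, $k=5$, so $t=3$, $\lambda=1$, $m(5,2,1)=0$, and the claim would read ``every $t$-subset of every edge is own,'' i.e.\ every $r$-focal-free family is an $(n,k,t)$-packing. But $\{A,B\}$ with $A=\{1,\dots,5\}$, $B=\{1,2,3,6,7\}$ (padded into any large $[n]$) is trivially $3$-focal-free and is not a packing, since $|A\cap B|=3=t$. So the per-edge bound fails precisely at the step you identified: after extracting $\lambda$ disjoint members $T_1,\dots,T_\lambda$ of $\mathcal{B}_A$, there is no reason the residual $R$ should contain any member of $\mathcal{B}_A$ at all, so the $(k-t+1)$-blocks $T_{\lambda+1},\dots,T_{r-1}$ need not give non-own $(t-1)$-sets $A\setminus T_i$. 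Moreover the numerical hypothesis $n\ge n_0$ cannot rescue this step: the extension problem is a local question about the $k$-element set $A$ and the hypergraph $\mathcal{B}_A\subseteq\binom{A}{k-t}$, with no dependence on $n$ whatsoever.

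The paper avoids this by proving the per-edge bound only for edges $A$ with \emph{no} own $(t-1)$-subset (the class $\mathcal{F}_0$); for such $A$, every $(t-1)$-subset of $A$ is automatically non-own, so the blocks of size $k-t+1$ cost nothing and \cref{obs:r-focal-hypergraph}(i) applies directly (this is \cref{lemma:hypergraphs-no own}). The complementary class $\mathcal{F}_1$ (edges with at least one own $(t-1)$-subset) is handled by a separate counting: each such edge contributes a nonempty set $\mathcal{O}_A$ of own $(t-1)$-subsets, these $\mathcal{O}_A$ are pairwise disjoint, and each $(t-1)$-set in $\mathcal{O}_A$ lifts to $n-t+1$ $t$-sets disjoint from all the own-$t$-set charges of $\mathcal{F}_0$. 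The hypothesis $n\ge n_0$ is used exactly to make $\frac{n-t+1}{t}\ge\binom{k}{t}-m(k,k-t,\lambda)$, so that both classes get charged at the same per-edge rate — not for any extension argument inside a single edge. Your final paragraph about \cref{prop:exact-hypergraph} inherits the same error: the $\lambda=1$ case does \emph{not} force $\mathcal{F}$ to be a packing; the proposition instead follows from the upper bound of \cref{thm-hypergraphs-upperbound} together with the observation (made in the introduction) that $(n,k,t)$-packings are $r$-focal-free.
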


The following lemma is needed for the proof of \cref{thm-hypergraphs-upperbound}.
\begin{lemma}\label{lemma:hypergraphs-no own}
    Let $n,k$ and $r$ be integers with $n\ge k\ge 2$ and $r\ge 3$. 
    Let $\mF\subseteq \binom{[n]}{k}$ be an $r$-focal-free hypergraph and $\mF_0=\{A\in\mF:\text{$A$ has no own $(t-1)$-subsets with respect to $\mF$}\}$, where $t=\lceil\frac{(r-2)k}{r-1}\rceil$.
    Then every $A\in \mF_0$ contains at least $\binom{k}{t}- m(k, k-t, \lambda)$ own $t$-subsets with respect to $\mF$.
\end{lemma}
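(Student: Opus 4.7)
The plan is to argue by contrapositive using \cref{obs:r-focal-hypergraph}(i): I will show that if some $A\in\mF_0$ possesses strictly more than $m(k,k-t,\lambda)$ non-own $t$-subsets, then one can build a partition $A=T_1\cup\cdots\cup T_{r-1}$ into non-empty parts with every $A\setminus T_i$ non-own, which by \cref{obs:r-focal-hypergraph}(i) yields an $r$-focal hypergraph in $\mF$ with focus $A$, contradicting that $\mF$ is $r$-focal-free.

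The first step is to reformulate the hypothesis in terms of $(k-t)$-subsets of $A$ by taking complements inside $A$. Letting
\[
\mathcal{S}:=\{A\setminus T : T\subseteq A,\ |T|=t,\ T\text{ is a non-own subset of }A\},
\]
the assumption gives $|\mathcal{S}|>m(k,k-t,\lambda)$, so, viewing $\mathcal{S}$ as a $(k-t)$-uniform hypergraph on the $k$-vertex set $A$, the very definition of $m(k,k-t,\lambda)$ produces $\lambda$ pairwise disjoint edges $S_1,\dots,S_\lambda\in\mathcal{S}$.

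The second step uses the arithmetic identity \eqref{eq:lambda}: after removing the $\lambda(k-t)$ vertices lying in $S_1\cup\cdots\cup S_\lambda$, the remaining $(r-1-\lambda)(k-t+1)$ vertices of $A$ can be partitioned arbitrarily into $r-1-\lambda$ further blocks $S_{\lambda+1},\dots,S_{r-1}$ of size $k-t+1$ each. Setting $T_i:=S_i$ for $i\in[r-1]$ gives a partition of $A$ into non-empty parts: for $i\le\lambda$, the set $A\setminus T_i$ is, by construction, the non-own $t$-subset corresponding to $S_i\in\mathcal{S}$; for $i>\lambda$, $A\setminus T_i$ is a $(t-1)$-subset of $A$, hence non-own by the defining property of $\mF_0$. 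Applying \cref{obs:r-focal-hypergraph}(i) then delivers the contradiction.

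I do not foresee a serious obstacle: the structural content is already bundled in \cref{obs:r-focal-hypergraph}(i), and the partition fits together thanks to the clean numerical identity \eqref{eq:lambda}. The only routine bookkeeping is non-emptiness of the parts $T_i$, which needs $k-t=\lfloor k/(r-1)\rfloor\ge 1$, i.e.\ $k\ge r-1$; the narrow boundary range $k<r-1$ forces $t=k$ and is degenerate (the bound $\binom{k}{t}-m(k,k-t,\lambda)$ is either $0$ or $1$ and can be verified directly).
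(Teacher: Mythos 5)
Your proof is correct and follows essentially the same route as the paper's: you introduce the complement family (your $\mathcal{S}$, which the paper calls $\mF_A$), invoke the defining extremal property of $m(k,k-t,\lambda)$ to extract $\lambda$ pairwise disjoint $(k-t)$-sets, complete the partition of $A$ with $(k-t+1)$-blocks via the identity \eqref{eq:lambda}, and then apply \cref{obs:r-focal-hypergraph}(i). Your closing remark on the boundary range $k<r-1$, where $k-t=0$ and the parts $T_1,\dots,T_\lambda$ would be empty, is a legitimate subtlety that the paper's proof silently skips; the lemma does hold there for the trivial reasons you indicate, so the gap is cosmetic rather than substantive.
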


\begin{proof}
    It suffices to show that every $A \in \mF_0$ contains at most $ m(k,k-t,\lambda)$ non-own $t$-subsets.
    Suppose on the contrary that there exists some $A\in \mF_0$ that contains at least $m(k,k-t,\lambda)+1$ non-own $t$-subsets.
    Define
    $$
    \mF_A = 
    \{A\setminus B: \ B \text{ is a non-own } t\text{-subset of } A\}
    \subseteq 
    \binom{A}{k-t}
    .$$
    
    \noindent By the definition of $m(k,k-t,\lambda)$, $\mF_A$ contains $\lambda$ pairwise disjoint members, say $T_1,\ldots,T_{\lambda}\in\binom{A}{k-t}$. By \eqref{eq:lambda}, there exist $r-\lambda -1$ disjoint subsets $T_{\lambda+1},\dots,T_{r-1}\in\binom{A}{k-t+1}$ such that
    $$
    A = T_1\cup T_2\cup \cdots \cup T_{r-1}.
    $$

    \noindent According to the definition of $\mF_A$ and the assumption that $A\in \mF_0$, it is not hard to infer that all of $A\setminus T_i$, $i\in [r-1]$, are non-own subsets of $A$. 
    Therefore, it follows from \cref{obs:r-focal-hypergraph} (i) that $\mathcal{F}$ contains an $r$-focal hypergraph with focus $A$, a contradiction.
\end{proof}

Now we are ready to prove Theorem~\ref{thm-hypergraphs-upperbound}.

\begin{proof}[Proof of Theorem~\ref{thm-hypergraphs-upperbound}]
    Suppose that $\mF \subseteq \binom{[n]}{k}$ is an $r$-focal-free hypergraph. 
    Let $\mF_0$ be defined as in \cref{lemma:hypergraphs-no own} and let
    \[ 
    \mF_1 = \{ A\in \mF : A \text{ contains at least one own $\left(t-1\right)$-subset with respect to } \mF\}.
    \]
    Then $\mF=\mF_0\cup \mF_1$.
    For each $A\in \mF_1$, let
    \[
	  \mathcal{O}_{A}:= \left\{ T\in \binom{A}{t-1} : T \text{ is an own $\left(t-1\right)$-subset of $A$ with respect to $\mF$}\right\},
   \]
   and 
   $$
   \mathcal{B}_A :=\left\{ 
   B\in \binom{[n]}{t}:
   \text{$B$ contains some member in $\mathcal{O}_A$}.
   \right\}
   $$
   Clearly, $A\in \mF_1$ implies that $\mathcal{O}_A$ and $\mathcal{B}_A$ are both nonempty.
   Observe that for distinct $A,A'\in\mathcal{F}_1$, we have $\mathcal{O}_A\cap \mathcal{O}_{A'}=\emptyset$; hence
   $|\cup_{A\in\F_1} \mathcal{O}_A|\ge |\F_1|$.
   Note further that any $t$-set contains $t$ subsets of cardinality $t-1$, and that each $T\in \mathcal{O}_A$ is contained in exactly $n - t +1$ members of $\binom{[n]}{t}$ which belong to $\mathcal{B}_A$.
   By counting the size of the set 
   $$\{ (T, B):  T\in \bigcup\nolimits_{A\in\F_1} \mathcal{O}_A \text{, } B\in \bigcup\nolimits_{A\in\F_1} \mathcal{B}_A 
    \text{ and }
   T\subseteq B
   \}$$ in two ways, one can infer that 
   \begin{equation}\label{eq:onesided-F_0}
       \left|\bigcup\nolimits_{A\in \mF_1} \mathcal{B}_A\right| 
       \ge \left|\bigcup\nolimits_{A\in\F_1} \mathcal{O}_A\right| \cdot \frac{ n - t +1}{t}
       \ge |\mF_1|\cdot \frac{ n - t +1}{t}.
   \end{equation}
   
   For each $M\in \mF_0$, let $\mathcal{C}_M$ be the set consisting of all own $t$-subsets of $M$.
   By Lemma~\ref{lemma:hypergraphs-no own}, we have $|\mathcal{C}_M|\ge \binom{k}{t} - 
   m(k, k-t, \lambda)$.
   By the definition of $\mathcal{C}_M$, it is easy to see that all of the sets $\mathcal{C}_M,~M\in \mF_0$, are pairwise disjoint. Therefore,
   \begin{equation}\label{eq:onesided-F_1}
       \left|\bigcup\nolimits_{M\in \mF_0} \mathcal{C}_M\right| \ge |\mF_0|\cdot\left(\binom{k}{t} - m(k, k-t, \lambda)\right).
   \end{equation}

   \noindent By definition, $\cup_{A\in \mF_1} \mathcal{B}_A$ and $\cup_{M\in \mF_0} \mathcal{C}_M$ are also disjoint. Combining (\ref{eq:onesided-F_0}) and (\ref{eq:onesided-F_1}) yields that
   \begin{align*}
       \binom{n}{t}
       \ge &
       \left|\bigcup\nolimits_{A\in \mF_1} \mathcal{B}_A\right| + \left|\bigcup\nolimits_{M\in \mF_0} \mathcal{C}_M\right| \\
       \ge &
       |\mF_1|\cdot \frac{ n - t +1}{t} + |\mF_0|\cdot \left(\binom{k}{t} - m(k, k-t, \lambda)\right)\\
       \ge & 
       (|\mF_1| + |\mF_0|)\cdot \left( \binom{k}{t} - m(k, k-t, \lambda) \right) \\
       = &
       |\mF|\cdot \left( \binom{k}{t} - m(k, k-t, \lambda) \right)
       ,
   \end{align*}
   as needed, where the last inequality holds when 
   $n\ge 
   \left(\binom{k}{t} - m(k, k-t, \lambda)\right) t + 
   t - 1.
   $
\end{proof}

\begin{remark}\label{remark-hypergraph}
    Using a similar method, one can get rid of the assumption $n\ge (\binom{k}{t} - m(k, k-t, \lambda)) t +  t - 1$, and prove a slightly weaker upper bound with a worse lower order term:
    $$f_r(n,k)\le 
		\frac{\binom{n}{t}}{\binom{k}{t} - m(k,k-t,\lambda)} + \binom{n}{t-1}.$$
    In fact, it is clear that $|\mathcal{F}|=|\mathcal{F}_0|+|\mathcal{F}_1|$. Moreover, it is not hard to deduce from the above proof that 
    $|\mathcal{F}_1|\le\binom{n}{t-1}$ and $|\mathcal{F}_0|\le\frac{
			\binom{n}{t}}{\binom{k}{t} - m(k,k-t,\lambda)
		} $, 
  yielding the desired upper bound.
\end{remark}

Furthermore, when $r-1 \mid k+1$, i.e., $\lambda=1$, the upper bound~\eqref{eq-hypergraphs-upperbound} becomes $\binom{n}{\lceil\frac{(r-2)k}{r-1}\rceil} / \binom{k}{\lceil\frac{(r-2)k}{r-1}\rceil}$. In what follows, we will show that this upper bound can actually be achieved whenever an $(n,k,\lceil\frac{(r-2)k}{r-1}\rceil)$-design exists.

\begin{proof}[Proof of \cref{prop:exact-hypergraph}]
    In Introduction, we have already showed that an $(n,k,\lceil\frac{(r-2)k}{r-1}\rceil)$-packing is also $r$-focal-free. The above discussion and the upper bound~\eqref{eq-hypergraphs-upperbound} for the special case $\lambda=1$ together prove the first half of the proposition.  

    For the second half, just note that Keevash~\cite{keevash2014existence} proved that $(n,k,t)$-designs always exist whenever $n$ is large enough and satisfies $\binom{k-i}{t-i}\mid \binom{n-i}{t-i}$ for all $0\le i\le t-1$ (see also \cite{Delcourt-Postle,Glock-Kuhn-Lo-Osthus} for alternative proofs). 
\end{proof}

\subsection{The lower bound of $f_r(n,k)$}\label{subsection-hypergraphs-lower}

In this subsection, we aim to prove the following lower bound. 

\begin{theorem}\label{thm-hypergraphs-lowerbound}
    For $r\ge 3$ and $k\ge 2$, let $t=\lceil\frac{(r-2)k}{r-1}\rceil$. Then we have

   \begin{equation}
        f_r(n,k) \ge 
        (1-o(1))\cdot\frac{
			\binom{n}{t}}{\binom{k}{t} - m(k,k-t,\lambda)} ,
    \end{equation}
    where $\lambda\in [r-1]$ is the unique integer that satisfies $k + \lambda\equiv 0\pmod{r-1}$, and $o(1)\to 0$ as $n\to \infty$.
\end{theorem}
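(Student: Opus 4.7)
The plan is to construct a near-optimal $r$-focal-free family $\F\subseteq\binom{[n]}{k}$ by packing labeled copies of a fixed template that mirrors an extremal Erd\H{o}s matching configuration. Fix $\mathcal{M}_0\subseteq\binom{[k]}{k-t}$ with $|\mathcal{M}_0|=m(k,k-t,\lambda)$ containing no $\lambda$ pairwise disjoint members, and set $\mathcal{N}_0:=\{[k]\setminus M:M\in\mathcal{M}_0\}$, $\mathcal{T}_0:=\binom{[k]}{t}\setminus\mathcal{N}_0$, so that $|\mathcal{T}_0|=\binom{k}{t}-m(k,k-t,\lambda)$. I view the template as the $t$-uniform hypergraph $([k],\mathcal{T}_0)$ whose edges are the designated ``own'' $t$-subsets of the template $k$-set.

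\paragraph{Construction via Frankl--F\"uredi.} I would then apply the Frankl--F\"uredi theorem on near-optimal induced hypergraph packings to produce a family $\F$ of labeled $k$-subsets of $[n]$, each $A\in\F$ carrying a bijection $\phi_A:[k]\to A$, such that (i) $|\F|\ge(1-o(1))\binom{n}{t}/|\mathcal{T}_0|$ and (ii) $\phi_A(\mathcal{T}_0)\cap\phi_{A'}(\mathcal{T}_0)=\emptyset$ as collections of $t$-subsets of $[n]$ whenever $A\ne A'$. Formally, (i)--(ii) amount to a near-perfect matching in the auxiliary $|\mathcal{T}_0|$-uniform hypergraph on $\binom{[n]}{t}$ whose hyperedges are $\{\phi(T):T\in\mathcal{T}_0\}$, one per labeled $k$-subset $(A,\phi)$; a direct count shows this auxiliary hypergraph is nearly regular with small codegrees, so a Pippenger--Spencer / R\"odl-nibble type argument (sharpened to the ``induced'' setting by Frankl and F\"uredi) delivers the desired $\F$. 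Property (ii) immediately forces the non-own $t$-subsets of any $A\in\F$ to lie in $\phi_A(\mathcal{N}_0)$; equivalently, $\{A\setminus B:B\text{ a non-own }t\text{-subset of }A\}\subseteq\phi_A(\mathcal{M}_0)$.

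\paragraph{Verification and main obstacle.} To verify $\F$ is $r$-focal-free, suppose for contradiction that $\F$ contains an $r$-focal hypergraph $\{A_0=A,A_1,\ldots,A_{r-1}\}$. By \cref{obs:r-focal-hypergraph}(ii), the sets $T_i:=A\setminus A_i$ are pairwise disjoint non-empty subsets of $A$ with $\sum_i|T_i|\le k$; extending to a partition $A=S_1\cup\cdots\cup S_{r-1}$ with $A\setminus S_i\subseteq A_i$, the goal is to extract a distinct $M_i\in\mathcal{M}_0$ with $\phi_A^{-1}(S_i)\subseteq M_i$ from each $S_i$, so that the disjointness of the $S_i$'s yields $r-1\ge\lambda$ pairwise disjoint members of $\mathcal{M}_0$, contradicting the extremality of $\mathcal{M}_0$. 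This extraction is clean whenever $|S_i|\le k-t$: then $A\setminus S_i$ contains a non-own $t$-subset $\phi_A(N_i)\in\phi_A(\mathcal{N}_0)$, forcing $\phi_A^{-1}(S_i)\subseteq[k]\setminus N_i=M_i\in\mathcal{M}_0$ (with equality when $|S_i|=k-t$, producing directly disjoint $M_i$'s). The argument therefore goes through cleanly when $\lambda=r-1$, since then $(r-1)(k-t)=k$ forces $|S_i|=k-t$ for every $i$. The main obstacle is the case $\lambda<r-1$: since $(r-1)(k-t)=k-(r-1-\lambda)<k$, any such partition must include some $|S_i|\ge k-t+1$, so that $A\setminus S_i$ is a non-own subset of $A$ of size $<t$ which cannot be read off directly from $\phi_A(\mathcal{N}_0)$. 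Overcoming this is where the ``induced'' strengthening of Frankl--F\"uredi becomes essential: the packing can additionally be arranged so that every non-own subset of $A$ of size $s<t$ sits inside some non-own $t$-subset $\phi_A(N)\in\phi_A(\mathcal{N}_0)$, which restores the extraction of $M_i\in\mathcal{M}_0$ in all cases and completes the argument.
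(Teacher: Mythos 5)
Your construction is the same as the paper's: you take an extremal $\mG\subseteq\binom{[k]}{k-t}$ ($\mathcal{M}_0$ in your notation) with no $\lambda$ pairwise disjoint edges, set $\mF=\binom{[k]}{t}\setminus\{[k]\setminus M:M\in\mG\}$ (your $\mathcal{T}_0$), and apply Frankl--F\"uredi to get an induced $\mF$-packing, whose vertex sets form the candidate $r$-focal-free family. Up to that point you match the paper.

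The verification is where there is a genuine gap, and you in fact flag it yourself but then paper it over with a claim that is not true. You extend the pairwise disjoint sets $T_i=A\setminus A_i$ to a partition $A=S_1\cup\cdots\cup S_{r-1}$ and try to extract a member of $\mathcal{M}_0$ from each $S_i$. There are two problems. First, when $|S_i|<k-t$ you only get $\phi_A^{-1}(S_i)\subsetneq M_i$, and the disjointness of the $S_i$'s does \emph{not} transfer to disjointness of the $M_i$'s, so the contradiction with the extremality of $\mathcal{M}_0$ is not obtained. Second, your fix for the $|S_i|\ge k-t+1$ case --- that the induced packing ensures every non-own subset of $A$ of size $s<t$ sits inside some non-own $t$-subset $\phi_A(N)$ with $N\in\mathcal{N}_0$ --- is not a property of induced packings. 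Property (iv) in \cref{def-hypergraph-packing} only says $|V(\mF_i)\cap V(\mF_j)|\le t$; when this intersection has size strictly less than $t$, nothing forces it to extend inside $A$ to a $t$-set lying in $\phi_A(\mathcal{N}_0)$, and in general it will not.

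What you are missing is that condition (iv) should be used at the very start of the verification, before introducing any partition. It gives $|A\cap A_i|\le t$ for \emph{every} $i\in[r-1]$, hence $|T_i|\ge k-t$ for every $i$. Combined with $\sum_{i=1}^{r-1}|T_i|\le k$ and the identity $\lambda(k-t)+(r-1-\lambda)(k-t+1)=k$ from \eqref{eq:lambda}, this forces at least $\lambda$ of the $T_i$'s to have size \emph{exactly} $k-t$, i.e.\ $|A\cap A_i|=t$. For those $i$, condition (v) gives $A\cap A_i\notin\mF_A$, so $\phi_A^{-1}(A\cap A_i)\in\mathcal{N}_0$ and hence $\phi_A^{-1}(T_i)\in\mathcal{M}_0$; these $\lambda$ sets are pairwise disjoint, contradicting the choice of $\mathcal{M}_0$. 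The extension to a partition $\{S_i\}$ and the distinction between $\lambda=r-1$ and $\lambda<r-1$ are both unnecessary once you use (iv) to bound the $T_i$'s directly, and the ``every small non-own subset extends to a non-own $t$-set in $\phi_A(\mathcal{N}_0)$'' claim should be dropped.
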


To prove \cref{thm-hypergraphs-lowerbound}, below we will introduce packings and induced packings of hypergraphs. 
\begin{definition}[Packings and induced packings]\label{def-hypergraph-packing} 
    For a fixed $t$-uniform hypergraph $\mF$ and a {\it host} $t$-uniform hypergraph $\mH$, a family of $m$ $t$-uniform hypergraphs
    $$
    \{
        (V(\mF_1),\mF_1),~(V(\mF_2),\mF_2),\ldots,(V(\mF_m),\mF_m)
    \}
    $$
    forms an $\mF$-packing in $\mH$ if for each $j\in [m]$,
    \begin{itemize}
        \item [{\rm (i)}] $V(\mF_j)\subseteq V(\mH)$,~$\mF_j\subseteq \mH$;
        \item [{\rm (ii)}] $\mF_j$ is a copy of $\mF$ defined on the vertex set $V(\mF_j)$;
        \item [{\rm (iii)}] The $m$ $\mathcal{F}$-copies are pairwise edge disjoint, i.e., $\mF_i \cap \mF_j = \emptyset $ for arbitrary distinct $i,j\in[m]$.
    \end{itemize}
    The above $\mF$-packing is said to be {\it induced} if it further satisfies
    \begin{itemize}
        \item [{\rm (iv)}] $|V(\mF_i)\cap V(\mF_j)|\le t$ for arbitrary distinct $i,j\in [m]$; 
        \item [{\rm (v)}] For $i\neq j$, if $|V(\mF_i)\cap V(\mF_j)|=t$, then $V(\mF_i)\cap V(\mF_j)\notin \mF_i\cup \mF_j$.
    \end{itemize}
\end{definition}

For our purpose, it suffices to use the above definition with $\mathcal{H}=\binom{[n]}{t}$. Since the $t$-uniform hypergraphs in an $\mathcal{F}$-packing are pairwise edge disjoint, it is clear that every $\mathcal{F}$-packing in $\binom{[n]}{k}$ can have at most $\binom{n}{t}/|\mathcal{F}|$ copies of $\mathcal{F}$. The influential works of R\"odl \cite{Rodl-nibble}, Frankl and R\"odl \cite{Frankl-Rodl-matching}, and Pippenger (see \cite{Pippenger-Spencer-asymptotic}) showed that the upper bound is asymptotically tight in the sense that there exists a near-optimal $\mathcal{F}$-packing that contains at least $(1-o(1))\cdot\binom{n}{t}/|\mathcal{F}|$ edge disjoint copies of $\mathcal{F}$. Frankl and F\"uredi \cite{frankl1987colored} strengthened their results by showing that there exists a near-optimal induced $\mathcal{F}$-packing. Their result turns out to be quite useful. It has been used to determine asymptotically the extremal number (or the degenerate Tur\'an density) for several hypergraph extremal problems, see \cite{Alon-Sudakov-1995-disjoint-systems,frankl1987colored,Shangguan-Tamo-degenerate} for a few examples. 

We quote the result of Frankl and F\"uredi as follows.
\begin{lemma}[\cite{frankl1987colored}]\label{lem-frankl}
    Let $k>t$ and $\mathcal{F}\subseteq\binom{[k]}{t}$ be fixed. Then there exists an induced $\mF$-packing
    $
    \{
        (V(\mF_i),\mF_i): i\in[m]
    \}
    $ in $\binom{[n]}{t}$ with $m\ge (1-o(1))\cdot\binom{n}{t}/|\mathcal{F}|$, where $o(1)\to 0$ as $n\to \infty$. 
\end{lemma}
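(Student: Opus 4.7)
The plan is to prove \cref{lem-frankl} via the R\"odl nibble / semi-random method applied to an auxiliary hypergraph, augmented by a conflict structure that enforces conditions (iv) and (v) of \cref{def-hypergraph-packing}. First I would define an $|\mF|$-uniform hypergraph $\mH^*$ on the vertex set $V(\mH^*)=\binom{[n]}{t}$: for each injective embedding $\phi\colon[k]\to[n]$, add the $|\mF|$-set $\phi(\mF)=\{\phi(e):e\in\mF\}$ as a hyperedge. A matching in $\mH^*$ corresponds to an edge-disjoint family of copies of $\mF$ in $\binom{[n]}{t}$. A direct count shows that $\mH^*$ is nearly regular: every vertex lies in $(1+o(1))D$ hyperedges for some $D=\Theta(n^{k-t})$, while every pair of distinct vertices lies in at most $O(n^{k-t-1})=o(D)$ common hyperedges. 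In the absence of conditions (iv) and (v), the Pippenger--Spencer / Frankl--R\"odl matching theorem would already yield a matching of the desired size $(1-o(1))\binom{n}{t}/|\mF|$.

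To enforce (iv) and (v), I would declare an unordered pair of hyperedges $\{\phi(\mF),\phi'(\mF)\}$ \emph{bad} if either $|\phi([k])\cap\phi'([k])|>t$, or $|\phi([k])\cap\phi'([k])|=t$ and the common $t$-subset is an edge of at least one of $\phi(\mF),\phi'(\mF)$. A routine count bounds the number of bad pairs through a fixed hyperedge by $O(n^{k-t-1})$ for the first type and $O(n^{k-t})=O(D)$ for the second type; in particular, every hyperedge sees only $O(D)$ bad partners, and an induced $\mF$-packing is precisely a matching of $\mH^*$ that contains no bad pair.

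The heart of the argument is an iterated random-selection process. In each round I would sample every surviving hyperedge independently with probability $p=\varepsilon/D$, delete from the sample every hyperedge that shares a vertex with, or forms a bad pair with, another sampled hyperedge, commit the survivors to the partial induced packing, and finally delete from the pool every remaining hyperedge that is incompatible with the packing built so far. Standard concentration inequalities (Azuma--Hoeffding or Talagrand), tracked along the nibble trajectory in the style of \cite{Pippenger-Spencer-asymptotic,frankl1987colored}, show that after each round the available hyperedges still form an approximately $D'$-regular hypergraph with $D'=(1-\Theta(\varepsilon))D$ and with small codegree, while the partial packing grows by a proportional amount. After $O(\log(1/\varepsilon))$ rounds one obtains an induced $\mF$-packing covering all but an $\varepsilon$-fraction of the vertices of $\mH^*$, and letting $\varepsilon\to 0$ sufficiently slowly as $n\to\infty$ finishes the proof.

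The main obstacle is that the number of bad pairs of the second type incident to a hyperedge is of the same order as its degree in $\mH^*$, so the induced restriction is not a negligible perturbation of the plain matching problem. One must therefore prove joint concentration for the vertex degree and the bad-pair degree throughout the nibble, and verify that both decay at rate $1-\Theta(\varepsilon)$ per round. The cleanest way to package this is to encode the bad pairs as additional conflict hyperedges on an enlarged vertex set, so that an induced packing corresponds exactly to a matching in the combined structure; the technical burden then reduces to checking that this enlarged hypergraph still satisfies the regularity and small-codegree hypotheses required by a Pippenger--Spencer-type / conflict-free matching theorem.
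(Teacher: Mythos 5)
The paper quotes \cref{lem-frankl} as a black box from Frankl and F\"uredi \cite{frankl1987colored} and gives no proof of it, so there is no internal argument to compare against; I assess your sketch on its own terms. The architecture is sound: the auxiliary $|\mF|$-uniform hypergraph $\mH^*$ on vertex set $\binom{[n]}{t}$, the identification of induced $\mF$-packings with matchings of $\mH^*$ that avoid bad pairs, the degree $D=\Theta(n^{k-t})$, the codegree $O(n^{k-t-1})=o(D)$, and the bad-pair degree counts ($O(n^{k-t-1})$ for type 1, $\Theta(n^{k-t})=\Theta(D)$ for type 2) all check out.

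The genuine gap sits exactly at the step you label ``the main obstacle'' and then dispose of in one sentence. Because type-2 bad pairs have degree of the same order as $D$, the conflict family is \emph{not} a sparse perturbation of the Pippenger--Spencer setting, and one cannot simply ``encode the bad pairs as additional conflict hyperedges'' and invoke a matching theorem. To make the nibble close, you must actually prove that (a) after each round the surviving type-2 conflict degree at every live hyperedge stays nearly uniform and decays at the same $1-\Theta(\varepsilon)$ rate as the vertex degree, and (b) the relevant conflict codegrees (common bad partners of two hyperedges, or of a hyperedge and a vertex) are $o(D)$, which is what underpins the second-moment and martingale concentration you appeal to. Neither estimate is carried out; these are precisely the boundedness and spreadness hypotheses that the modern conflict-free hypergraph matching theorems require, and checking them for the type-2 conflicts here is the real work of the lemma. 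As written, your proposal asserts the conclusion of that analysis (``joint concentration \dots shows that after each round \dots'') without supplying it, so the proof is incomplete at the load-bearing step. To finish along your lines you would need either to verify the full quantitative hypotheses of a conflict-free matching theorem for this $\mH^*$ and this bad-pair family, or to run the nibble by hand while explicitly tracking the joint regularity invariant; both are substantial and neither appears in the write-up.
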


We proceed to present the proof of \cref{thm-hypergraphs-lowerbound}.

\begin{proof}[Proof of Theorem~\ref{thm-hypergraphs-lowerbound}]
    Let $\mG\subseteq\binom{[k]}{k-t}$ be one of the largest $(k-t)$-uniform hypergraphs on $k$ vertices that do not contain $\lambda$ pairwise disjoint edges, where $t=\lceil\frac{(r-2)k}{r-1}\rceil$. By definition, we have $|\mG| = m(k,k-t,\lambda)$. 
     Let $\mG' = \{ [k]\setminus A: A\in\mG\}$ and $\mF = \binom{[k]}{t}\setminus \mG'$.  
    Clearly, we have $|\mG'| = |\mG|$ and $|\mF| = \binom{k}{t} - |\mG'| = \binom{k}{t} - m(k,k-t,\lambda)$.

    Applying \cref{lem-frankl} with $\mF$ defined as above gives an induced $\mF$-packing $\{(V(\mF_i),\mF_i): i\in[m]\}$ in $\binom{[n]}{t}$ with 
    $$m\ge (1-o(1))\cdot\binom{n}{t}/|\mathcal{F}|= (1-o(1))\cdot\frac{\binom{n}{t}}{\binom{k}{t} - m(k,k-t,\lambda)},$$
where $o(1)\to 0$ as $n\to \infty$. Note that for each $i\in [m]$, we have $V(\mF_i)\in\binom{[n]}{k}$. 

    The following observation follows straightforwardly from the construction of $\mF$.
    \begin{observation}\label{obs:property-of-F}
       We have $\mG=\{[k]\setminus A':A'\in\binom{[k]}{t}\setminus\mF\}$. By definition of $\mG$, for every copy $\mF_i$ of $\mF$, the $(k-t)$-uniform hypergraph $\{V(\mF_i)\setminus A':A'\in\binom{V(\mF_i)}{t}\setminus\mF_i\}$ does not contain $\lambda$ pairwise disjoint edges.
    \end{observation}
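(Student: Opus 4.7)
The plan is to unwind the definitions in two short steps. First, to establish the identity $\mG=\{[k]\setminus A':A'\in\binom{[k]}{t}\setminus\mF\}$, I will note that by the definition $\mF=\binom{[k]}{t}\setminus\mG'$, taking complements inside $\binom{[k]}{t}$ gives $\binom{[k]}{t}\setminus\mF=\mG'$. Since $\mG'=\{[k]\setminus A:A\in\mG\}$ and $A\mapsto[k]\setminus A$ is an involution on $2^{[k]}$ that sends $\binom{[k]}{k-t}$ bijectively to $\binom{[k]}{t}$, applying complementation once more yields
\[
\{[k]\setminus A':A'\in\binom{[k]}{t}\setminus\mF\}=\{[k]\setminus A':A'\in\mG'\}=\mG.
\]

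Second, for an arbitrary copy $\mF_i$ in the induced packing, let $\phi:[k]\to V(\mF_i)$ be a bijection witnessing that $(V(\mF_i),\mF_i)$ is isomorphic to $([k],\mF)$, so that $\mF_i=\{\phi(A):A\in\mF\}$ and hence $\binom{V(\mF_i)}{t}\setminus\mF_i=\{\phi(A'):A'\in\binom{[k]}{t}\setminus\mF\}$. Since $\phi$ is a bijection between $[k]$ and $V(\mF_i)$, complementation commutes with $\phi$: for any $A'\subseteq[k]$ we have $V(\mF_i)\setminus\phi(A')=\phi([k]\setminus A')$. Combining these two facts with the first part of the observation gives
\[
\{V(\mF_i)\setminus A':A'\in\binom{V(\mF_i)}{t}\setminus\mF_i\}=\{\phi([k]\setminus A'):A'\in\binom{[k]}{t}\setminus\mF\}=\{\phi(A):A\in\mG\},
\]
so this hypergraph is an isomorphic copy of $\mG$ on vertex set $V(\mF_i)$.

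Finally, because $\mG$ was chosen as a $(k-t)$-uniform hypergraph on $k$ vertices that does not contain $\lambda$ pairwise disjoint edges, and the property of containing $\lambda$ pairwise disjoint edges is preserved under the isomorphism induced by $\phi$, the hypergraph $\{V(\mF_i)\setminus A':A'\in\binom{V(\mF_i)}{t}\setminus\mF_i\}$ does not contain $\lambda$ pairwise disjoint edges either. There is no real obstacle in the proof — it is purely a bookkeeping argument — the only thing to be careful about is keeping the complements taken in the correct ambient set (namely $[k]$ for $\mG,\mG',\mF$ and $V(\mF_i)$ for the packed copies), which is handled by observing that $\phi$ intertwines complementation on $[k]$ with complementation on $V(\mF_i)$.
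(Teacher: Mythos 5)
Your proof is correct and matches what the paper intends: the paper omits the proof entirely, stating only that the observation "follows straightforwardly from the construction of $\mF$," and your argument is precisely the natural unwinding of that construction (complementation being an involution on $\binom{[k]}{t}\leftrightarrow\binom{[k]}{k-t}$, and the copy-defining bijection $\phi$ intertwining complementation on $[k]$ with complementation on $V(\mF_i)$).
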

    
    To prove \cref{thm-hypergraphs-lowerbound}, it suffices to show that $$\mH(\mF):=\{V(\mF_i):i\in[m]\}\subseteq\binom{[n]}{k}$$ is $r$-focal-free. Suppose for the sake of contradiction that $V(\mF_1),\ldots,V(\mF_r)\in\mH(\mF)$ form an $r$-focal hypergraph with focus $V(\mF_r)$. Then, it follows from \cref{obs:r-focal-hypergraph} (ii) that all members of $\{V(\mF_r)\setminus V(\mF_i):i\in[r-1]\}$ are pairwise disjoint subsets of $V(\mF_r)$. Moreover, it follows from the definition of an induced packing (see \cref{def-hypergraph-packing} (iv)) that for each $i\in [r-1]$, $|V(\mF_r)\cap V(\mF_i)|\le t$, which implies that $|V(\mF_r)\setminus V(\mF_i)|\ge k-t$. Combining the discussion above and \eqref{eq:lambda}, it is not hard to verify that there are at least $\lambda$ distinct $i\in [r-1]$ such that $|V(\mF_r)\setminus V(\mF_i)|=k-t$. Assume without loss of generality that $$|V(\mF_r)\setminus V(\mF_1)|=\cdots=|V(\mF_r)\setminus V(\mF_{\lambda})|=k-t.$$ 
    
    \noindent This implies that for each $i\in [\lambda]$, we have $|V(\mF_r)\cap V(\mF_i)|=t$.
    
    It again follows from the definition of an induced packing (see \cref{def-hypergraph-packing} (v)) that for each $i\in [\lambda]$, $V(\mF_r)\cap V(\mF_i)\not\in\mF_r$, and hence $\{V(\mF_r)\cap V(\mF_i):i\in[\lambda]\}\subseteq\binom{V(\mF_r)}{t}\setminus\mF_r$. Consequently, we have $$\{V(\mF_r)\setminus V(\mF_i):i\in[\lambda]\}\subseteq\{V(\mF_r)\setminus A':A'\in\binom{V(\mF_r)}{t}\setminus\mF_r\}$$
    
    \noindent and therefore the latter hypergraph contains $\lambda$ pairwise disjoint edges, which contradicts \cref{obs:property-of-F}. This completes the proof of \cref{thm-hypergraphs-lowerbound}.       
\end{proof} 

\section{Focal-free codes}\label{section-codes}

\subsection{Upper and lower bounds of $f_r^q(n)$}

The proof of \cref{thm:main-code} follows from the same approach as the proof of \cref{thm:main-hypergraph}, which is briefly explained below.

Let us define the own subsequences of a vector, which is an analogy for own subsets of a set. For a vector $\bm x=(x_1,\ldots,x_n)\in[q]^n$ and a subset of indices $T\subseteq[n]$, let $\bm x_T=(x_i:i\in T)$ denote the subsequence of $\bm x$ with coordinates indexed by $T$. For two vectors $\bm x,\bm y\in[q]^n$, let $I(\bm x,\bm y)=\{i\in[n]:x_i=y_i\}$ denote the set of indices of coordinates for which $\bm x$ and $\bm y$ are equal.
    For a code $\mathcal{C}\subseteq [q]^n$ and a codeword $\bm x\in\mathcal{C}$, a subsequence $\bm x_T$ is called an {\it own subsequence} of $\bm x$ (with respect to $\C$) if for every $\bm y \in \C\setminus \{\bm x\}$, $\bm x_T \neq \bm y_T$; otherwise, $\bm x_T$ is called a {\it non-own subsequence} of $\bm x$ (with respect to $\C$).

Similarly to \cref{obs:r-focal-hypergraph}, the following observation presents sufficient and necessary conditions for the existence of an $r$-focal code. 

\begin{observation}\label{obs:r-focal-code}
    Let $\mathcal{C}\subseteq [q]^n$ be a code with $|\mathcal{C}|\ge r$. Then the following hold:
    \begin{itemize}
        \item [{\rm (i)}] If for some $\bm x\in\mathcal{C}$, there is a partition $[n]=T_1\cup\cdots\cup T_{r-1}$ such that for each $i\in[r-1]$, $T_i\neq\emptyset$ and $\bm x_{[n]\setminus T_i}$ is a non-own subsequence of $\bm x$, then $\mathcal{C}$ contains an $r$-focal code with focus $\bm x$.

        \item [{\rm (ii)}] If $\bm x,\bm x^1,\dots,\bm x^{r-1}\in\mathcal{C}$ form an $r$-focal code with focus $\bm x$, then the $r-1$ members of $\{[n]\setminus I(\bm x,\bm x^i):i\in[r-1]\}$ are pairwise disjoint subsets of $[n]$.
    \end{itemize}
\end{observation}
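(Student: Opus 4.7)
The plan is to mimic the proof of \cref{obs:r-focal-hypergraph}, essentially translating the set-containment condition ``$x\in A_i$'' into the coordinate-wise agreement condition ``$x^i_j = x_j$''. Both parts should follow directly from unfolding the definitions of ``non-own subsequence'', ``$r$-focal code'', and the index set $I(\bm x, \bm y)$, so I expect the proof to be short.

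For part (i), the approach is to use the non-ownness of each subsequence $\bm x_{[n]\setminus T_i}$ to extract a witness codeword $\bm x^i\in\C\setminus\{\bm x\}$ that agrees with $\bm x$ on all coordinates outside $T_i$. The candidate $r$-focal code is then $\bm x, \bm x^1, \ldots, \bm x^{r-1}$. I would first verify pairwise distinctness: if $\bm x^i = \bm x^{i'}$ for some $i\ne i'$ in $[r-1]$, then this common vector agrees with $\bm x$ on $([n]\setminus T_i)\cup([n]\setminus T_{i'}) = [n]\setminus(T_i\cap T_{i'}) = [n]$, because the $T_i$'s partition $[n]$ and are therefore disjoint; this would force $\bm x^i = \bm x$, contradicting $\bm x^i\in\C\setminus\{\bm x\}$. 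Then I would verify the focal condition: for any coordinate $j\in[n]$, there is a unique $i_0\in[r-1]$ with $j\in T_{i_0}$, and for every $i\ne i_0$ we have $j\in[n]\setminus T_i$, hence $x^i_j = x_j$. This gives $|\{i\in[r-1]:x^i_j = x_j\}|\ge r-2$, completing part (i).

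For part (ii), I would simply unpack the focal condition. The hypothesis says that for each $j\in[n]$, the set $S_j:=\{i\in[r-1]: x^i_j\ne x_j\}$ has size at most $1$. Noting that $j\in[n]\setminus I(\bm x,\bm x^i)$ is equivalent to $i\in S_j$, any coordinate $j$ lying in two different sets $[n]\setminus I(\bm x,\bm x^i)$ and $[n]\setminus I(\bm x,\bm x^{i'})$ would force $|S_j|\ge 2$, a contradiction. So the $r-1$ sets $[n]\setminus I(\bm x,\bm x^i)$ are pairwise disjoint subsets of $[n]$, as claimed.

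There is no real obstacle here; the only step that requires a short argument rather than a one-line reading of definitions is the distinctness of the witnesses $\bm x^1,\ldots,\bm x^{r-1}$ in part (i), which is handled by the disjointness of the partition blocks. Because of this, I expect the authors (like for \cref{obs:r-focal-hypergraph}) to omit the proof entirely, or to present it in just a few lines.
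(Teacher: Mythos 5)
Your proof is correct and is exactly the direct unfolding of definitions that the paper has in mind; indeed the paper omits the proof of \cref{obs:r-focal-code} entirely with the remark ``Since the observation follows directly from the definition of an $r$-focal code, we omit its proof,'' just as you anticipated. The one step requiring a moment's thought---distinctness of the witnesses $\bm x^1,\ldots,\bm x^{r-1}$ in part (i), via disjointness of the partition blocks---is handled correctly.
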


\noindent Since the observation follows directly from the definition of an $r$-focal code, we omit its proof. 

We note that similarly to the usage of \cref{obs:r-focal-hypergraph} in the proof of \cref{thm:main-hypergraph}, \cref{obs:r-focal-code} (i) and (ii) will also be used in the proofs of the upper and lower bounds of the limit in \cref{thm:main-code}, respectively. The main technical difference is that, instead of dealing with own/non-own subsets, we will work with own/non-own subsequences, and will use an appropriate version of \cref{lem-frankl} recently developed in \cite{liu2024near}. We will state our main results below, and postpone their proofs to \cref{sec:main-code}. 

First, the following is an upper bound on $f_r^q(n)$.

\begin{theorem}\label{thm-codes-upperbound}
 For integers $r\geq 3$ and $n\geq 2$, let $t=\lceil\frac{(r-2)n}{r-1} \rceil$. When $q\geq \frac{ t }{n-t+1} \left( \binom{n}{t} - m(n,n-t,\lambda)\right) $, we have
 $$
	f_r^{q}(n) \le\frac{\binom{n}{t}}{\binom{n}{t}-m(n,n-t,\lambda)} q^{t},
	$$
 where $\lambda\in [r-1]$ is the unique integer that satisfies $n+\lambda \equiv 0 \pmod{r-1}$.
\end{theorem}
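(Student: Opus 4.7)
My plan is to closely follow the proof of \cref{thm-hypergraphs-upperbound}, substituting own/non-own subsequences for own/non-own subsets and invoking \cref{obs:r-focal-code} (i) in place of \cref{obs:r-focal-hypergraph} (i). The ambient object to be bounded is now the set of all pairs $(T,\bm{v})$ with $T\in\binom{[n]}{t}$ and $\bm{v}\in[q]^{t}$, of which there are exactly $\binom{n}{t}q^{t}$.

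First I would split $\mathcal{C}=\mathcal{C}_0\cup\mathcal{C}_1$ according to whether $\bm{x}$ has no own $(t-1)$-subsequence (put in $\mathcal{C}_0$) or has at least one (put in $\mathcal{C}_1$). The analog of \cref{lemma:hypergraphs-no own} I would then establish reads: every $\bm{x}\in\mathcal{C}_0$ has at most $m(n,n-t,\lambda)$ non-own $t$-subsequences. If not, the $(n-t)$-uniform hypergraph $\{[n]\setminus T:\bm{x}_T\text{ is a non-own }t\text{-subsequence}\}$ contains $\lambda$ pairwise disjoint edges $S_1,\ldots,S_\lambda$, which by the identity \eqref{eq:lambda} (with $k$ replaced by $n$) extend to a partition $[n]=S_1\cup\cdots\cup S_{r-1}$ with $|S_i|=n-t$ for $i\le\lambda$ and $|S_i|=n-t+1$ for $i>\lambda$. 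For $i\le\lambda$ the subsequence $\bm{x}_{[n]\setminus S_i}$ is non-own by construction, and for $i>\lambda$ it has length $t-1$ and is non-own because $\bm{x}\in\mathcal{C}_0$, so \cref{obs:r-focal-code} (i) delivers an $r$-focal code with focus $\bm{x}$, a contradiction.

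For $\mathcal{C}_1$ I would set $\mathcal{O}_{\bm{x}}=\{(T,\bm{x}_T):T\in\binom{[n]}{t-1},~\bm{x}_T\text{ own}\}$ and $\mathcal{B}_{\bm{x}}=\{(T',\bm{v}')\in\binom{[n]}{t}\times[q]^{t}:T\subseteq T'\text{ and }\bm{v}'_T=\bm{x}_T\text{ for some }(T,\bm{x}_T)\in\mathcal{O}_{\bm{x}}\}$. The families $\{\mathcal{O}_{\bm{x}}\}_{\bm{x}\in\mathcal{C}_1}$ are pairwise disjoint by the definition of ``own'', each element of $\mathcal{O}_{\bm{x}}$ extends to exactly $(n-t+1)q$ pairs in $\mathcal{B}_{\bm{x}}$ (pick one of the $n-t+1$ remaining coordinates and one of $q$ possible values), and each element of $\bigcup_{\bm{x}\in\mathcal{C}_1}\mathcal{B}_{\bm{x}}$ has $t$ different $(t-1)$-restrictions. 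Double counting then yields $\bigl|\bigcup_{\bm{x}\in\mathcal{C}_1}\mathcal{B}_{\bm{x}}\bigr|\ge |\mathcal{C}_1|(n-t+1)q/t$. A short check (using that an own $(t-1)$-restriction pins down a unique codeword) shows that $\bigcup_{\bm{x}\in\mathcal{C}_1}\mathcal{B}_{\bm{x}}$ is disjoint from the collection of own $t$-pairs coming from the codewords in $\mathcal{C}_0$.

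Summing the two contributions produces $\binom{n}{t}q^{t}\ge|\mathcal{C}_1|(n-t+1)q/t+|\mathcal{C}_0|(\binom{n}{t}-m(n,n-t,\lambda))$, and the hypothesis $q\ge\tfrac{t}{n-t+1}(\binom{n}{t}-m(n,n-t,\lambda))$ makes the coefficient of $|\mathcal{C}_1|$ at least the coefficient of $|\mathcal{C}_0|$, immediately giving the claimed bound on $|\mathcal{C}|=|\mathcal{C}_0|+|\mathcal{C}_1|$. I expect that the step most worth care is the partition argument inside the lemma analog: one must really exploit $\bm{x}\in\mathcal{C}_0$ to ensure that the length-$(t-1)$ pieces $\bm{x}_{[n]\setminus S_i}$ for $i>\lambda$ are automatically non-own; once this is clean, the remainder is a formal translation of the hypergraph argument.
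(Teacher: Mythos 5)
Your proof is correct and mirrors the paper's argument step for step: the same split into $\mathcal{C}_0$ and $\mathcal{C}_1$, the same lemma (at most $m(n,n-t,\lambda)$ non-own $t$-subsequences for $\bm x\in\mathcal{C}_0$, proved via the partition of $[n]$ and \cref{obs:r-focal-code}(i)), and the same double count. The only cosmetic difference is that the paper routes the counting through the $n$-partite $t$-uniform hypergraph $\mH_n^{(t)}(q)$ via the bijection $\pi$, whereas you count pairs $(T,\bm v)\in\binom{[n]}{t}\times[q]^t$ directly — these are the same objects in different notation.
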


\begin{remark}
    Similarly to \cref{remark-hypergraph}, one can get rid of the assumption 
    $q\geq \frac{t}{n-t +1} ( \binom{n}{t} - m(n,n-t,\lambda))$, 
    and prove a slightly weaker upper bound with a worse lower order term:
   $$
	f_r^{q}(n) \le 
	\frac{\binom{n}{t}}{\binom{n}{t}-m(n,n-t,\lambda)} q^{t}
	+ \binom{n}{t-1}q^{t-1}.
	$$
    We omit the details.
\end{remark}

We proceed to state a lower bound on $f_r^q(n)$.

\begin{theorem}\label{thm-codes-lowerbound}
    For any integers $r\geq 3$ and $n\geq 2$, let $t=\lceil\frac{(r-2)n}{r-1}\rceil$. Then we have
      $$
    f_r^{q}(n) \geq 
    (1-o(1))\cdot \frac{\binom{n}{t}}{\binom{n}{t}-m(n,n-t,\lambda)}  q^{t},
    $$
    where $\lambda\in [r-1]$ is the unique integer that satisfies $n + \lambda\equiv 0\pmod{r-1}$, and $o(1)\to 0$ as $q\to \infty$.
\end{theorem}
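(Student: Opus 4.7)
The plan is to mirror the proof of \cref{thm-hypergraphs-lowerbound}, replacing Frankl and F\"uredi's induced packing lemma with the multi-partite variant of Liu and Shangguan mentioned in the outline. The key conceptual shift is to view codewords in $[q]^n$ as transversals in the complete $n$-partite $n$-uniform hypergraph on the vertex set $V=[n]\times [q]$: a codeword $\bm x$ is identified with the transversal $\{(i,x_i):i\in [n]\}$, and a length-$t$ subsequence $\bm x_T$ is identified with $\{(i,x_i):i\in T\}$, which is a transversal of $t$ of the $n$ parts. Under this dictionary, the complete $n$-partite $t$-uniform hypergraph on $[n]\times [q]$ has exactly $\binom{n}{t}q^t$ edges.

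First, as in \cref{thm-hypergraphs-lowerbound}, let $\mG\subseteq\binom{[n]}{n-t}$ be a largest $(n-t)$-uniform hypergraph on $n$ vertices with no $\lambda$ pairwise disjoint edges, so $|\mG|=m(n,n-t,\lambda)$. Set $\mG'=\{[n]\setminus A:A\in\mG\}$ and $\mF=\binom{[n]}{t}\setminus \mG'$, so $|\mF|=\binom{n}{t}-m(n,n-t,\lambda)$. Next, apply the Liu--Shangguan multi-partite induced packing result to pack copies of $\mF$ into the complete $n$-partite $t$-uniform host on $[n]\times [q]$, where each copy is required to respect the part structure (the vertex set of each copy is a transversal, and the bijection between $[n]$ and the parts used in $\mF$ is the natural one). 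This yields an induced $\mF$-packing
\[
\{(V(\mF_j),\mF_j):j\in [m]\},\qquad m\ge (1-o(1))\cdot\frac{\binom{n}{t}q^t}{|\mF|}=(1-o(1))\cdot \frac{\binom{n}{t}q^t}{\binom{n}{t}-m(n,n-t,\lambda)},
\]
where $o(1)\to 0$ as $q\to\infty$, and each $V(\mF_j)$ corresponds to a codeword $\bm x^{(j)}\in [q]^n$.

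The final step is to show that $\mathcal C:=\{\bm x^{(j)}:j\in [m]\}$ is $r$-focal-free, which yields the desired lower bound. Assume for contradiction that $\bm x^{(1)},\dots,\bm x^{(r)}$ form an $r$-focal code with focus $\bm x^{(r)}$. By \cref{obs:r-focal-code} (ii), the sets $S_i:=[n]\setminus I(\bm x^{(r)},\bm x^{(i)})$, $i\in [r-1]$, are pairwise disjoint subsets of $[n]$. Under the dictionary, $|V(\mF_r)\cap V(\mF_i)|=|I(\bm x^{(r)},\bm x^{(i)})|$, so the induced packing condition (iv) gives $|S_i|\ge n-t$ for each $i$. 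Combined with $\sum_i |S_i|\le n$ and the identity \eqref{eq:lambda} (with $k$ replaced by $n$), pigeonhole forces at least $\lambda$ of the $S_i$ to have size exactly $n-t$; say $|S_i|=n-t$ for $i\in [\lambda]$. For these $i$ we have $|V(\mF_r)\cap V(\mF_i)|=t$, so the induced condition (v) yields $V(\mF_r)\cap V(\mF_i)\notin \mF_r$, which via the partition-respecting isomorphism translates to $T_i:=I(\bm x^{(r)},\bm x^{(i)})\in \binom{[n]}{t}\setminus \mF=\mG'$. Equivalently, $S_i=[n]\setminus T_i\in \mG$, and the $S_i$ for $i\in [\lambda]$ are $\lambda$ pairwise disjoint members of $\mG$, contradicting the choice of $\mG$.

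The main obstacle is a bookkeeping one, namely verifying that the multi-partite variant of Frankl--F\"uredi from \cite{liu2024near} applies cleanly in our setting: the copies must be partition-respecting so that the identification between $\mF\subseteq\binom{[n]}{t}$ and each $\mF_j$ is the canonical one coming from the $n$ parts, and conditions (iv)--(v) of \cref{def-hypergraph-packing} must transfer verbatim. Once this is in place, the combinatorial reduction to the extremal property of $\mG$ is a direct transcription of the hypergraph argument.
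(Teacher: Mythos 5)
Your proposal is correct and takes essentially the same approach as the paper: identical choice of $\mG$, $\mG'$, $\mF$, the same application of the Liu--Shangguan multi-partite (``faithful'') induced-packing lemma (\cref{lem-induced-packing}) via the dictionary $\pi$, and the same contradiction argument using \cref{obs:r-focal-code}(ii) together with conditions (iv)--(v) of \cref{def-hypergraph-packing}. The bookkeeping concern you flag about partition-respecting copies is precisely what the paper handles by defining faithful copies and noting that $\pi$ preserves the own-subsequence/own-subset correspondence.
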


\subsection{Exact values of $f_r^q(n)$}\label{subsection-codes-exact}

In this subsection, we will prove \cref{prop-exact}. First, note that $m(n,\lfloor\frac{n}{r-1}\rfloor,1)=0$ for $\lambda=1$. By \cref{thm-codes-upperbound}, for $n\equiv -1 \pmod{r-1}$ and sufficiently large $q\ge q_0(n)$, where $q_0(n)=2^{\Theta(n)}$, we have $f_r^{q}(n) \le q^{\lceil\frac{(r-2)n}{r-1}\rceil}$. The following theorem, which is the main technical result of this subsection, shows that the same upper bound in fact holds for significantly smaller $q$.

\begin{theorem}\label{thm-exact-upper}
    Let $r\geq 3$. Suppose that $q\geq r-1$ and $r-1\mid n+1$. Then
    \begin{align*}
        f_r^{q}(n) \leq q^{\lceil\frac{(r-2)n}{r-1}\rceil}.
    \end{align*}
\end{theorem}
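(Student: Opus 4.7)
The plan is to refine the double-counting argument of \cref{thm-codes-upperbound}, exploiting the structural simplification that arises when $\lambda=1$, equivalently $r-1\mid n+1$. Write $n=(r-1)s-1$ and $t=(r-2)s$, so that $n-t+1=s$ and a focal partition of $[n]$ has one part of size $s-1$ and $r-2$ parts of size $s$.

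As a first step I would split $\mathcal{C}=\mathcal{C}_0\cup\mathcal{C}_1$ according to whether a codeword has an own $(t-1)$-subsequence, as in \cref{thm-codes-upperbound}. Since $m(n,n-t,1)=0$, the code analog of \cref{lemma:hypergraphs-no own} forces every $\bm x\in\mathcal{C}_0$ to have all of its $\binom{n}{t}$ $t$-subsequences own; the resulting pairs $(S,\bm x_S)\in\binom{[n]}{t}\times[q]^t$ are mutually disjoint from any pair claimed by any other codeword.

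For each $\bm x\in\mathcal{C}_1$, I would instead claim the collection of pairs $\{(T\cup\{i\},\bm v):T\in\mathcal{O}_{t-1}(\bm x),\,i\in[n]\setminus T,\,\bm v_T=\bm x_T\}$, where $\mathcal{O}_{t-1}(\bm x)$ denotes the family of own $(t-1)$-subsequences of $\bm x$. An inclusion-exclusion within each $t$-set $S$ (the pairwise intersections of the extension sets attached to two different own $(t-1)$-subsets of $S$ collapse to the single point $\bm x_S$) shows that $\bm x$ contributes $(q-1)s\,|\mathcal{O}_{t-1}(\bm x)|+|\mathcal{O}_t(\bm x)|$ distinct pairs. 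A case analysis parallel to the argument for hypergraphs in \cref{thm-hypergraphs-upperbound} then verifies that two distinct codewords in $\mathcal{C}_1$ claim essentially disjoint sets of pairs.

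The remaining and most delicate step is to ensure that each $\bm x\in\mathcal{C}_1$ contributes at least $\binom{n}{t}$ pairs, so that summing over $\mathcal{C}$ yields $|\mathcal{C}|\binom{n}{t}\le\binom{n}{t}q^t$, i.e., $|\mathcal{C}|\le q^t$. This reduces to lower-bounding $|\mathcal{O}_{t-1}(\bm x)|$ in terms of $|\mathcal{N}_t(\bm x)|$, which I would obtain from the focal-free partition condition---every partition of a non-own $t$-subsequence $S$ of $\bm x$ into $r-2$ $s$-subsets must contain some part whose complement is an own $(t-1)$-subsequence of $\bm x$---combined with an Erd\H{o}s-matching estimate for $s$-uniform hypergraphs on $(r-2)s$ vertices. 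I expect this last step to be the main obstacle: the hypothesis $q\ge r-1$ enters precisely here, as it is what allows one to convert the per-codeword count into the tight inequality $|\mathcal{C}|\le q^t$ and to eliminate the lower-order error term appearing in the weaker bound of \cref{remark-hypergraph}.
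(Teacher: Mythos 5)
Your proposal and the paper's proof take genuinely different routes, and your route has a real gap that you have not closed. You attempt to refine the double-counting scheme of \cref{thm-codes-upperbound} so that the lower-order term $\binom{n}{t-1}q^{t-1}$ disappears, by charging each $\bm x\in\C_1$ a weighted family of pairs $(S,\bm v)$ built from its own $(t-1)$-subsequences. Two steps of this plan are asserted but not established. First, the ``essentially disjoint'' claim for two codewords $\bm x,\bm y\in\C_1$ does not follow from the hypergraph argument of \cref{thm-hypergraphs-upperbound}: a collision $(S,\bm v)$ requires only two distinct own $(t-1)$-sets $T_1\subset S$ of $\bm x$ and $T_2\subset S$ of $\bm y$ with $\bm v_{T_1}=\bm x_{T_1}$, $\bm v_{T_2}=\bm y_{T_2}$; since $T_1\cup T_2=S$ this pins $\bm v$ down uniquely but gives no contradiction, so overlaps are possible and must be accounted for. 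Second, the crucial per-codeword inequality $(q-1)s\,|\mathcal{O}_{t-1}(\bm x)|+|\mathcal{O}_t(\bm x)|\ge\binom{n}{t}$, equivalently $(q-1)s\,|\mathcal{O}_{t-1}(\bm x)|\ge|\mathcal{N}_t(\bm x)|$, is the heart of the matter; you correctly identify it as the main obstacle, but the Erd\H{o}s-matching estimate you invoke would have to be carried out on $s$-uniform hypergraphs over each non-own $t$-set and then aggregated, and it is not at all clear that the resulting threshold on $q$ coincides with $q\ge r-1$ rather than something depending on $n$. In short, your plan is a plausible program, not a proof.

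The paper sidesteps both difficulties with a different and considerably lighter argument. Writing $n=(r-1)d-1$ and $U_S=\{\bm x\in\C:\bm x_S\text{ is own}\}$, focal-freeness gives, for every partition $[n]=T_1\cup\cdots\cup T_{r-1}$ into nonempty parts, the covering $\C=U_{[n]\setminus T_1}\cup\cdots\cup U_{[n]\setminus T_{r-1}}$. One then picks $S\in\binom{[n]}{n-d}$ maximizing $|U_S|$, proves the elementary bound $|U_{S\cup\{j\}}|\le q^{n-d+1}-(q-1)|U_S|$ by counting extensions, and feeds the specific partition with $T_{r-1}=[n]\setminus(S\cup\{j\})$ of size $d-1$ and the remaining $r-2$ blocks of size $d$ into the covering identity. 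If $|\C|\ge q^{n-d+1}+1$, a pigeonhole over the $r-2$ blocks (this is exactly where $q\ge r-1$ enters, in the form $q-1\ge r-2$) produces an $S'\in\binom{[n]}{n-d}$ with $|U_{S'}|>|U_S|$, contradicting maximality. This variational argument needs no disjointness bookkeeping, no inclusion--exclusion inside a $t$-set, and no matching estimate. If you want to pursue your double-counting refinement, you would need to supply both the exact collision analysis and a sharp per-codeword bound; I would recommend instead adopting the extremal choice of $S$ and the partition identity, which deliver the clean bound directly.
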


\noindent Clearly, \cref{thm-exact-upper} implies the upper bound of \cref{prop-exact}. We postpone the proof of \cref{thm-exact-upper} to the end of this subsection. 

Now we turn to the lower bound of \cref{prop-exact}. Indeed, Alon and Holzman \cite{alon2023near} proved exactly the same lower bound under a stronger assumption on the parameters, i.e., $q\ge n$ and $q$ is a prime power (see \eqref{eq:AH-lower-bound}). We will prove our new result by connecting error-correcting codes with large minimum distance to focal-free codes. Note that such a connection was implicitly used in the proof of \eqref{eq:AH-lower-bound} in \cite{alon2023near} (see Proposition 3.2 in \cite{alon2023near}). We will state it explicitly in the next lemma. Note that for any two vectors $\bm x,\bm y \in [q]^n$, the {\it Hamming distance} between $\bm x,\bm y$ is defined by $d(\bm x,\bm y) = |\{i\in [n] : x_i \neq y_i\}|$. The {\it minimum distance} of a code $\C\subseteq [q]^n$, denoted by $d(\C)$, is $\min \{d(\bm x,\bm y) : \text{~distinct~} \bm x,\bm y \in \C\}$. 

\begin{lemma}\label{lem:ECC-to-focal-free}
    If $\C\subseteq [q]^n$ satisfies $d(\C)> \lfloor\frac{n}{r-1}\rfloor$, then $\C$ is $r$-focal-free.
\end{lemma}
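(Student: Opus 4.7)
The plan is to derive a contradiction by assuming $\C$ contains an $r$-focal code and then exploiting the disjointness structure guaranteed by \cref{obs:r-focal-code}(ii), combined with the pigeonhole bound on $\lfloor n/(r-1)\rfloor$.

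First, I would suppose for contradiction that $\C$ contains an $r$-focal code with focus $\bm x^0$ and remaining codewords $\bm x^1,\ldots,\bm x^{r-1}$. Applying \cref{obs:r-focal-code}(ii), the sets
\[
S_i \;:=\; [n]\setminus I(\bm x^0,\bm x^i), \qquad i\in[r-1],
\]
are pairwise disjoint subsets of $[n]$. Hence
\[
\sum_{i=1}^{r-1} |S_i| \;\le\; n.
\]

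Next, I would invoke the distance hypothesis. Since $|S_i| = d(\bm x^0,\bm x^i) \ge d(\C) > \lfloor n/(r-1)\rfloor$, each $|S_i|$ is at least $\lfloor n/(r-1)\rfloor + 1$. Summing over $i$ yields
\[
(r-1)\bigl(\lfloor n/(r-1)\rfloor + 1\bigr) \;\le\; n.
\]
However, writing $n = (r-1)\lfloor n/(r-1)\rfloor + s$ with $0 \le s \le r-2$, the left-hand side equals $n - s + (r-1) \ge n+1$, a contradiction.

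Since the argument reduces to a single line of arithmetic once the observation is in hand, I do not expect any serious obstacle; the only mildly subtle point is keeping the strict-versus-weak inequality straight when passing from $d(\C) > \lfloor n/(r-1)\rfloor$ to $|S_i| \ge \lfloor n/(r-1)\rfloor + 1$, which is what makes the final bound tight enough to contradict $\sum |S_i| \le n$.
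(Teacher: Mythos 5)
Your proof is correct and follows essentially the same route as the paper: both invoke \cref{obs:r-focal-code}(ii) to get the pairwise disjointness of the sets $[n]\setminus I(\bm x^0,\bm x^i)$, deduce $\sum_i d(\bm x^0,\bm x^i)\le n$, and then contradict the minimum-distance hypothesis. The paper phrases the final step as a pigeonhole conclusion (some $d(\bm x^0,\bm x^j)\le\lfloor n/(r-1)\rfloor$), whereas you sum the lower bounds and do the arithmetic directly; these are the same argument stated in contrapositive forms.
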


\begin{proof}
    Assume otherwise that $\{\bm x^0,\bm x^1,\dots,\bm x^{r-1}\}\subseteq\C$ form an $r$-focal code with focus $\bm x^0$. Then by \cref{obs:r-focal-code} (ii), $\{[n]\setminus I(\bm x^0,\bm x^i):i\in[r-1]\}$ are pairwise disjoint subsets of $[n]$, which implies that $\sum_{j=1}^{r-1} d(\bm x^0,\bm x^j)\le n$. Therefore, there exists some $j\in [r-1]$ such that $d(\bm x^0,\bm x^j) \leq \lfloor\frac{n}{r-1}\rfloor$, a contradiction.
\end{proof}

Alon and Holzman constructed focal-free codes by Reed-Solomon codes~\cite{Reed-Solomon}, which is a classic in coding theory. Here, we will construct focal-free codes by applying \cref{lem:ECC-to-focal-free} to codes generated by orthogonal arrays. More precisely, given positive integers $t,n,q$, an {\it orthogonal array} $OA(t,n,q)$ is an $n\times q^t$ matrix, say $A$, with entries from $[q]$ such that in every $t\times q^t$ submatrix of $A$, every possible vector in $[q]^t$ appears as a column exactly once. Hence, any two different columns of $A$ agree in at most $t-1$ rows. Let $\mathcal{C}\subseteq [q]^n$ be the code formed by the column vectors of $A$. Then, it is easy to see that $d(\mathcal{C})\ge n-t+1$. Consequently, we can construct the codes required in \cref{lem:ECC-to-focal-free} by known results on the existence of orthogonal arrays. 

\begin{lemma}[see {\cite[Theorems \uppercase\expandafter{\romannumeral3}.7.18 and \uppercase\expandafter{\romannumeral3}.7.20, page 226]{colbourn2007crc}}]\label{lem:OA}
    Let $q= p_1^{e_1} \cdots p_s^{e_s}$ be the canonical integer factorization of $q\geq 2$, where $p_1,\ldots,p_s$ are distinct primes, $e_1,\ldots,e_s$ are positive integers, and $p_1^{e_1}<\cdots<p_s^{e_s}$. If $t< p_1^{e_1}$ and $n\leq p_1^{e_1}+1$, then an $OA(t,n,q)$ exists.
\end{lemma}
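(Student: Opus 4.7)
The plan is to construct an $OA(t,n,q)$ in two stages: first, for each prime-power factor $p_i^{e_i}$ of $q$, build an $OA(t,n,p_i^{e_i})$ from an MDS code; second, combine these $s$ orthogonal arrays into a single $OA(t,n,q)$ via a product (Kronecker-style) construction. This decomposition is enabled by the hypothesis, which controls the \emph{smallest} prime-power factor $p_1^{e_1}$ and hence automatically gives $t<p_i^{e_i}$ and $n\leq p_i^{e_i}+1$ for every $i\in[s]$.

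For the first stage, I would identify $[p_i^{e_i}]$ with the field $\mathbb{F}_{p_i^{e_i}}$, choose $n$ distinct evaluation points $\alpha_1,\ldots,\alpha_n\in\mathbb{F}_{p_i^{e_i}}\cup\{\infty\}$ (which is possible since $n\leq p_i^{e_i}+1$), and take the extended Reed--Solomon code
\[
C_i = \{(f(\alpha_1),\ldots,f(\alpha_n)) : f\in\mathbb{F}_{p_i^{e_i}}[x],\ \deg f<t\},
\]
with $f(\infty)$ interpreted as the leading coefficient of $f$ whenever $\infty$ is among the $\alpha_j$. A polynomial of degree $<t$ is uniquely recovered from its values at any $t$ distinct points of $\mathbb{F}_{p_i^{e_i}}\cup\{\infty\}$, so the projection of $C_i$ onto any $t$ coordinates is a bijection onto $\mathbb{F}_{p_i^{e_i}}^t$. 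Arranging the $(p_i^{e_i})^t$ codewords of $C_i$ as the columns of an $n\times(p_i^{e_i})^t$ matrix then gives the required $OA(t,n,p_i^{e_i})$.

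For the second stage, I would fix any bijection $\phi:[q_1]\times[q_2]\to[q_1 q_2]$ and, from an $OA(t,n,q_1)$ $A_1$ and an $OA(t,n,q_2)$ $A_2$, form the $n\times(q_1 q_2)^t$ matrix $A$ whose columns are indexed by ordered pairs $(c_1,c_2)$ of column indices of $A_1,A_2$ and whose entries are $A[j,(c_1,c_2)] = \phi(A_1[j,c_1],A_2[j,c_2])$. In any $t$ rows of $A$, each vector in $[q_1 q_2]^t$ appears exactly once, because its $[q_1]$-components uniquely determine $c_1$ (by the OA property of $A_1$) and its $[q_2]$-components uniquely determine $c_2$; hence $A$ is an $OA(t,n,q_1 q_2)$. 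Iterating this step over $i=1,\ldots,s$ merges the orthogonal arrays $OA(t,n,p_i^{e_i})$ into the desired $OA(t,n,q)$.

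The main obstacle is not a deep conceptual one but rather careful bookkeeping at two points: handling the boundary case $n=p_1^{e_1}+1$ cleanly, which requires adjoining a ``point at infinity'' to the Reed--Solomon evaluation, and verifying that the product construction truly preserves $t$-strength under an arbitrary bijection $\phi$. Both are standard instances of the MDS--OA equivalence, and no new ideas are needed beyond that dictionary.
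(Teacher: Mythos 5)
Your proposal is correct and follows precisely the route that the cited handbook theorems take: Theorem III.7.18 is the MDS/(extended) Reed--Solomon construction giving an $OA(t,n,p_i^{e_i})$ whenever $t<p_i^{e_i}$ and $n\le p_i^{e_i}+1$, and Theorem III.7.20 is the MacNeish/Kronecker-product theorem merging strength-$t$ arrays over alphabets of sizes $q_1$ and $q_2$ into one of alphabet size $q_1 q_2$. Since the paper cites this lemma without proof, your blind reconstruction is in fact the standard argument, and both bookkeeping points you flag (evaluation at $\infty$ via the coefficient of $x^{t-1}$, and strength preservation under the product) are handled exactly as you describe.
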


We prove \cref{prop-exact} by combining \cref{thm-exact-upper} and Lemmas \ref{lem:ECC-to-focal-free} and \ref{lem:OA}.

\begin{proof}[Proof of \cref{prop-exact}]
    The upper bound $f_r^{q}(n) \leq q^{\lceil\frac{(r-2)n}{r-1}\rceil}$ is a direct consequence of \cref{thm-exact-upper}. For the lower bound, according to above discussion and \cref{lem:ECC-to-focal-free}, it is not hard to see that $f_r^{q}(n) \ge q^{\lceil\frac{(r-2)n}{r-1}\rceil}$ as long as there exists an $OA(\lceil\frac{(r-2)n}{r-1}\rceil,n,q)$. In the meantime, the existence of such an orthogonal array follows straightforwardly from \cref{lem:OA}.
\end{proof}

It remains to prove \cref{thm-exact-upper}. 
 
\begin{proof}[Proof of \cref{thm-exact-upper}]
    Write $n=(r-1)d-1$. Then $\lceil\frac{(r-2)n}{r-1}\rceil=n-d+1$. It suffices to show that $f_r^{q}(n)\le q^{n-d+1}$. Suppose for the sake of contradiction that there exists an $r$-focal-free code $\C\subseteq [q]^n$ with $|\C|\ge q^{n-d+1}+1$. Clearly, $d\ge 2$ since $|\C|\le q^n$. For a subset $S\subseteq [n]$, let 
    $$U_S=\{\bm x\in\mathcal{C}:\text{$\bm x_S$ is an own subsequence of $\bm x$ with respect to $\mathcal{C}$}\}.$$
    Then, \cref{obs:r-focal-code} (i) implies that for every partition $[n]=T_1\cup\cdots\cup T_{r-1}$, where $T_i\neq\emptyset$ for each $i\in[r-1]$, we have
    \begin{align}\label{eq:disjoint-union}
        \mathcal{C}=U_{[n]\setminus T_1}\cup\cdots\cup U_{[n]\setminus T_{r-1}}.
    \end{align}
    
    Let $S\in\binom{[n]}{n-d}$ be a subset such that $|U_S|$ is maximized (possibly $|U_S|=0$, and break ties arbitrarily). We will deduce a contradiction by showing that if $q\ge r-1$ and $|\C|\ge q^{n-d+1}+1$, then there must exist some $S'\in\binom{[n]}{n-d}\setminus\{S\}$ such that $|U_{S'}|\ge |U_S|+1$. 
    
    Assume without loss of generality that $S=[n-d]$. We will construct the desired $S'$ by bounding $|U_{[n-d+1]}|$ from the above. Clearly, 
    $$U_{[n-d+1]}=(U_{[n-d+1]}\cap U_{[n-d]})\cup(U_{[n-d+1]}\setminus U_{[n-d]}).$$ It is obvious that $|U_{[n-d+1]}\cap U_{[n-d]}|\le |U_{[n-d]}|$. Moreover, for every $\bm y\in[q]^{n-d}$ (which is not an own subsequence $\bm x_{[n-d]}$ for any $\bm x\in\C$), there are at most $q$ choices of $\bm x\in\C$ such that there exists some $a\in [q]$, for which $(\bm y,a)\in[q]^{n-d+1}$ is an own subsequence $\bm x_{[n-d+1]}$ of $\bm x$. This implies that $$|U_{[n-d+1]}\setminus U_{[n-d]}|\le q\cdot(q^{n-d}-|U_{[n-d]}|).$$ Therefore, we have 
    \begin{align}\label{eq:upper-bound}
        |U_{[n-d+1]}|\le|U_{[n-d]}|+q\cdot(q^{n-d}-|U_{[n-d]}|)=q^{n-d+1}-(q-1)\cdot|U_{[n-d]}|.
    \end{align}

    Observe that $n=(r-2)d+(d-1)$. Consider the partition $[n]=T_1\cup\dots\cup T_{r-1}$, where $T_i=\{(i-1)d+1,\dots,id\}$ for $i\in [r-2]$ and $T_{r-1}=\{(r-2)d+1,\dots,(r-1)d-1\}$. Then $|T_1|=\dots=|T_{r-2}|=d,~|T_{r-1}|=d-1$, and $[n-d+1]=[n]\setminus T_{r-1}$. By \eqref{eq:disjoint-union} and \eqref{eq:upper-bound}, we have
    \begin{align*}
        |U_{[n]\setminus T_1}\cup\cdots\cup U_{[n]\setminus T_{r-2}}|&\ge|\C|-| U_{[n]\setminus T_{r-1}}|=|\C|-|U_{[n-d+1]}|\\
        &\ge(|\C|-q^{n-d+1})+(q-1)\cdot|U_{[n-d]}|\\
        &\ge 1+(q-1)\cdot|U_{[n-d]}|.
    \end{align*}

    \noindent By pigeonhole principle, there exists some $i\in [r-2]$ such that 
    $$|U_{[n]\setminus T_i}|\ge\left\lceil\frac{1+(q-1)\cdot|U_{[n-d]}|}{r-2}\right\rceil\ge |U_{[n-d]}|+1,$$
    where the last inequality holds since $q-1\ge r-2$. Setting $S'=[n]\setminus T_i$, we have $S'\in\binom{[n]}{n-d}$ and $|U_{S'}|>|U_{[n-d]}|$, contradicting the maximality of $U_{[n-d]}$. This completes the proof of the theorem.
\end{proof}

\section{Concluding remarks}\label{section-conclusion}

In this paper, we presented asymptotically tight upper and lower bounds for both focal-free uniform hypergraphs and codes, thus improving the corresponding results of Alon and Holzman~\cite{alon2023near}. In addition, we also determined the exact values of these hypergraphs and codes for infinitely many parameters. Many interesting problems remain open. 

\begin{itemize}
    \item Let $f_r(n)$ denote the maximum cardinality of an $r$-focal-free hypergraph $\F\subseteq 2^{[n]}$. In \cite{alon2023near}, it was observed that the upper and lower bounds on $f_r(n)$ can be proved by $f_r(n)\le\sum_{k=1}^{n} f_r(n,k)$ and by the probabilistic method, respectively. Can we improve these bounds?

    \item We have determined $f_r^q(n)$ asymptotically for fixed $r,n$ and $q\rightarrow\infty$, However, for fixed $r,q$ and $n\rightarrow\infty$, there is still a huge gap between the known upper and lower bounds (see~\cite{alon2023near} for details). So, it would be interesting to narrow this gap. In particular, the upper bound for the binary case is also an upper bound for near-sunflower-free hypergraphs. 

    \item Theorems \ref{thm-hypergraphs-lowerbound} and \ref{thm-codes-lowerbound} are proved by Lemmas \ref{lem-frankl} and \ref{lem-induced-packing}, and therefore the lower bounds are non-explicit. Giving near-optimal explicit constructions for both problems remains open.
\end{itemize}

\section*{Acknowledgements}
\noindent The authors would like to thank Zixiang Xu for telling them that this work was initiated by Xinqi Huang, Xiande Zhang and Yuhao Zhao, and by Chong Shangguan simultaneously and independently, which led to this collaboration. 
The research of Xiande Zhang is supported by the National Key Research and Development Programs of China 2023YFA1010200 and 2020YFA0713100, the NSFC under Grants No. 12171452 and No. 12231014, and the Innovation Program for Quantum Science and Technology 2021ZD0302902. 
The research of Chong Shangguan is supported by the National Natural Science Foundation of China under Grant Nos. 12101364 and 12231014, and the Natural Science Foundation of Shandong Province under Grant No. ZR2021QA005.

{\small\bibliographystyle{abbrv}
\normalem
\bibliography{reference}}

\appendix

\section{Proof of \cref{thm:main-code}}\label{sec:main-code}

In this section, we present proofs of Theorems \ref{thm-codes-upperbound} and \ref{thm-codes-lowerbound}. For fixed $r,n$, let $t:=\lceil\frac{(r-2)n}{r-1}\rceil$. Then $t=n-\lfloor\frac{n}{r-1}\rfloor$. Moreover, $\lambda\in [r-1]$ satisfies $n+\lambda \equiv 0 \pmod{r-1}$ if and only if 
\begin{align}\label{eq:lambda2}
    \lambda(n-t)+(r-1-\lambda)(n-t+1)=n.
\end{align}
For notational convenience, we need the following useful definition that connects subsets of $[q]^n$ to multi-partite hypergraphs.

\begin{definition}\label{def-hypergraph}
    For positive integers $q, n$, let $\mH_n(q)$ denote the complete $n$-partite $n$-uniform hypergraph with equal part size $q$, where the vertex set $V(\mH_n(q))$ admits a partition $V(\mH_n(q)) = V_1\cup \cdots \cup V_n$ with $V_i = \{(i,a): a\in [q]\}$ for each $i\in [n]$, and the edge set is defined as
        $$
 	\mH_n(q) = \{ \{(1,a_1),\ldots,(n,a_n)\}: a_1, a_2, \ldots, a_n\in [q]\}.
 	$$
    Clearly, $|V(\mH_n(q))| = nq$ and $|\mH_n(q)| = q^n$.
 	
    For $t \le n$, let $\mH_n^{(t)}(q)$ denote the complete $n$-partite $t$-uniform hypergraph with equal part size $q$, where    $V(\mH_n^{(t)}(q))=V(\mH_n(q))$, and
 	$$
 	\mH_n^{(t)}(q) = 
 	\{
 	\{(i_1,a_{i_1}),\ldots,(i_t,a_{i_t})\}: 1\le i_1<\cdots< i_t\le n, a_{i_1},\ldots,a_{i_t}\in [q]
 	\}.
 	$$
  Then $|\mH_n^{(t)}(q)|=\binom{n}{t}q^t$.
\end{definition}

Let $\pi:[q]^n\rightarrow\mathcal{H}_n(q)$ be a bijection defined as follows. For each $\bm x\in [q]^n$, let $$\pi(\bm x):=\{(1,x_1),(2,x_2),\dots,(n,x_n)\} \in \mathcal{H}_n(q).$$ For $\mathcal{C}\subseteq [q]^n$, let $\pi(C):=\{\pi(\bm x):\bm x\in\mathcal{C}\}$. Moreover, for $T\subseteq [n]$ and a subsequence $\bm x_T$ of $\bm x$, let $\pi(\bm x_T)=\{(i,x_i):i\in T\}\in \mathcal{H}_n^{(|T|)}(q)$. Crucially, $\pi$ inherits the own subsequence property in the sense that $\bm x_T$ is an own subsequence of $\bm x$ with respect to $\mathcal{C}$ if and only if $\pi(\bm x_T)$ is an own subset of $\pi(\bm x)$ with respect to $\pi(\mathcal{C})$.

\subsection{Proof of \cref{thm-codes-upperbound}}

The following lemma is an analogue to Lemma~\ref{lemma:hypergraphs-no own}.

\begin{lemma}\label{lem:no-own-k-1-implies-many-own-k}
    Let $n\ge 2$ and $r\ge 3$. Suppose that $\C\subseteq [q]^n$ is an $r$-focal-free code. Let $\C_0$ be the set of codewords in $\C$ that have no own $(t-1)$-subsequence with respect to $\C$, where $t=\lceil\frac{(r-2)n}{r-1}\rceil$. Then every $\bm x \in \C_0$ contains at least $\binom{n}{t} - m(n,n-t,\lambda)$ own $t$-subsequences with respect to $\C$.
\end{lemma}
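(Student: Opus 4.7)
The plan is to mirror the proof of \cref{lemma:hypergraphs-no own}, using the dictionary between sets and subsequences (via the map $\pi$ introduced before this lemma) together with \cref{obs:r-focal-code} (i) in place of \cref{obs:r-focal-hypergraph} (i), and the identity \eqref{eq:lambda2} in place of \eqref{eq:lambda}. I would argue by contradiction: assume some $\bm x \in \C_0$ contains at least $m(n,n-t,\lambda)+1$ non-own $t$-subsequences with respect to $\C$.

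Given this assumption, define the auxiliary $(n-t)$-uniform hypergraph
\[
\mH_{\bm x} \;=\; \bigl\{[n]\setminus B : B\in\tbinom{[n]}{t} \text{ and } \bm x_B \text{ is a non-own $t$-subsequence of }\bm x\bigr\} \subseteq \tbinom{[n]}{n-t}.
\]
By assumption $|\mH_{\bm x}| \ge m(n,n-t,\lambda)+1$, so by the definition of $m(\cdot,\cdot,\cdot)$ there exist $\lambda$ pairwise disjoint sets $T_1,\dots,T_\lambda \in \mH_{\bm x}$. By \eqref{eq:lambda2}, $\lambda(n-t)+(r-1-\lambda)(n-t+1)=n$, so I can extend $T_1,\dots,T_\lambda$ to a partition $[n]=T_1\cup T_2\cup\cdots\cup T_{r-1}$ in which $|T_i|=n-t$ for $i\in[\lambda]$ and $|T_i|=n-t+1$ for $\lambda<i\le r-1$.

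The next step is to verify that each $\bm x_{[n]\setminus T_i}$ is a non-own subsequence of $\bm x$. For $i\in[\lambda]$ this is immediate from the construction of $\mH_{\bm x}$, since $[n]\setminus T_i$ has size $t$ and $T_i\in\mH_{\bm x}$ means $\bm x_{[n]\setminus T_i}$ is a non-own $t$-subsequence. For $\lambda<i\le r-1$, the set $[n]\setminus T_i$ has size $t-1$, and the hypothesis $\bm x\in\C_0$ says $\bm x$ has no own $(t-1)$-subsequence with respect to $\C$, so $\bm x_{[n]\setminus T_i}$ is non-own as well. Since each $T_i$ is nonempty (because $n\ge 2$ and $r\ge 3$ force $n-t\ge 1$), \cref{obs:r-focal-code} (i) then produces an $r$-focal code inside $\C$ with focus $\bm x$, contradicting the assumption that $\C$ is $r$-focal-free. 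Hence $\bm x$ has at most $m(n,n-t,\lambda)$ non-own $t$-subsequences, which yields the claimed lower bound of $\binom{n}{t}-m(n,n-t,\lambda)$ own $t$-subsequences.

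There is no substantive obstacle beyond bookkeeping: the only place one must be careful is confirming that the two cardinality regimes ($n-t$ and $n-t+1$) in the partition match precisely the two types of non-own subsequences we have at our disposal, which is exactly what \eqref{eq:lambda2} guarantees. The use of $\C_0$ is essential for handling the $(t-1)$-sized pieces of the partition, which is the only new ingredient compared to a straightforward disjointness count. Everything else is a verbatim translation of the hypergraph argument through $\pi$.
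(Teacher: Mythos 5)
Your proof matches the paper's proof of this lemma essentially verbatim: same auxiliary $(n-t)$-uniform hypergraph (your $\mH_{\bm x}$ is the paper's $\F_{\bm x}$, just written as a set of complements), same extraction of $\lambda$ disjoint $(n-t)$-sets via the definition of $m(\cdot,\cdot,\cdot)$, same completion to a partition via \eqref{eq:lambda2}, and same application of \cref{obs:r-focal-code}~(i) using $\C_0$ to handle the $(t-1)$-sized pieces. One small slip: your parenthetical ``$n\ge 2$ and $r\ge 3$ force $n-t\ge 1$'' is not literally true, since $n-t=\lfloor n/(r-1)\rfloor$ vanishes whenever $n<r-1$ (e.g.\ $n=2$, $r=4$); however, in that regime $t=n$, the unique $n$-subsequence of $\bm x$ is trivially own, so the contradiction hypothesis of at least one non-own $t$-subsequence is vacuous and the argument never reaches \cref{obs:r-focal-code}~(i). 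The paper's own proof silently skips this check as well, so the gap is common to both and harmless.
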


\begin{proof}
    It suffices to show that every $\bm x \in \C_0$ contains at most $ m(n,n-t,\lambda)$ non-own $t$-subsequences. Suppose on the contrary that there exists some $\bm x \in \C_0$ that contains at least $m(n,n-t,\lambda) +1$ non-own $t$-subsequences with respect to $\C$. Let 
    $$\F_{\bm x}:=\{T \in \binom{[n]}{n-t} : \bm x_{[n]\setminus T} \text{ is a non-own $t$-subsequence of } \bm x\}.$$ 
    Then $|\F_{\bm x}|\ge m(n,n-t,\lambda) +1$ and by definition, $\F_{\bm x}$ contains $\lambda$ pairwise disjoint members $T_1,\dots,T_{\lambda}$. By \eqref{eq:lambda2}, there exist  $T_{\lambda+1},\dots,T_{r-1}\in\binom{[n]}{n-t+1}$ such that
    $$[n]=T_1\cup \cdots \cup T_{r-1}.$$
    As $T_1,\dots,T_{\lambda}\in\F_{\bm x}$ and $\bm x \in \C_0$, for each $i\in[r-1]$, $\bm x_{[n]\setminus T_i}$ is a non-own subsequence of $\bm x$. 
    Therefore, it follows from \cref{obs:r-focal-code} (i) that $\mathcal{C}$ contains an $r$-focal code with focus $\bm x$, a contradiction.
\end{proof}

Now we are ready to present the proof of Theorem~\ref{thm-codes-upperbound}.

\begin{proof}[Proof of Theorem~\ref{thm-codes-upperbound}]
    Suppose that $\C\subseteq [q]^n$ is an $r$-focal-free code. Let $\C_0$ be defined as in \cref{lem:no-own-k-1-implies-many-own-k}, and let
	 \[
	 \C_1 = \{\bm x\in \C : \bm x \text{ contains at least one own $\left(t-1\right)$-subsequence with respect to } \C\}.
	 \]
    Clearly, $\C=\C_0 \cup \C_1$. By the discussion above, for each $\bm x\in \C_1$, $\pi(\bm x)$ contains at least one own $(t-1)$-subset with respect to $\pi(\C)$.
      Let
	  \[
	  \mathcal{O}_{\bm x}:= \{ T\in \mH_n^{(t-1)}(q) : T \text{ is an own $\left(t-1\right)$-subset of $\pi(\bm x)$ with respect to $\pi(\C)$}\},
	  \]
	   and 
	  \[
	  \B_{\bm x}:= \{ B\in \mH_n^{(t)}(q) : B \text{ contains some $T\in \mathcal{O}_{\bm x}$}\}.
	  \]
      Clearly, $\bm x\in\C_1$ implies that $\mathcal{O}_{\bm x}$ and $\B_{\bm x}$ are both nonempty. For distinct $\bm x,\bm x'\in\mathcal{C}_1$, we have $\mathcal{O}_{\bm x}\cap \mathcal{O}_{\bm x'}=\emptyset$, so $|\cup_{\bm x\in \C_1} \mathcal{O}_{\bm x}|\ge |\C_1|$. Moreover, every $T=\{(i_1,a_{i_1}),\ldots,(i_{t-1},a_{i_{t-1}})\}\in \mathcal{O}_{\bm x}$ is contained in exactly $(n-t +1)q$ edges in $\mH_n^{(t)}(q)$, say $\{(i_1,a_{i_1}),\ldots,(i_{t-1},a_{i_{t-1}}),(i_t,a_{i_t})\}\in\mH_n^{(t)}(q)$, where $i_t\in[n]\setminus\{i_1,\ldots,i_{t-1}\}$ has $n-t+1$ choices and $a_{i_t}\in[q]$ has $q$ choices. 
      Therefore, by counting the size of the set 
      $$\{ (T, B):  T\in \bigcup\nolimits_{\bm x\in \C_1} \mathcal{O}_{\bm x} \text{, } B\in \bigcup\nolimits_{\bm x\in \C_1} \mathcal{B}_{\bm x}
     \text{ and } T\subseteq B\}$$ in two ways, one can infer that
        \[
	  \left|\bigcup\nolimits_{\bm x\in\C_1} \B_{\bm x}\right| \geq \frac{1}{t} \cdot \left| \bigcup\nolimits_{\bm x\in \C_1} \mathcal{O}_{\bm x} \right| (n-t+1)q
	  \geq |\C_1|\cdot\frac{(n-t +1)q }{t}.
	  \]
	  
	  For each $\bm y \in \C_0$, let
	  \[
	  \mathcal{A}_{\bm y}:= \{ S\in \mH_n^{(t)}(q) : S \text{ is an own $t$-subset of $\pi(\bm y)$ with respect to $\pi(\C)$}\}.
	  \]
	  By Lemma~\ref{lem:no-own-k-1-implies-many-own-k}, $|\mathcal{A}_{\bm y}|\geq \binom{n}{t} - m(n,n-t,\lambda)$ for every $\bm y \in \C_0$.
	  Note that by the definition of own subsequences and subsets, it is routine to check that $\mathcal{A}_{\bm y}$, $\bm y\in\mathcal{C}_0$, are pairwise disjoint; moreover,   $(\cup_{\bm x\in\C_1} \B_{\bm x})\cap(\cup_{\bm y\in\C_0} \A_{\bm y})=\emptyset$. Consequently, 
	  \begin{align*}
	   \binom{n}{t} q^{t}
	     &=  |\mH_n^{(t)}(q)|
	   \geq \left|\bigcup\nolimits_{\bm x\in\C_1} \B_{\bm x}\right| + \left|\bigcup\nolimits_{\bm y\in\C_0} \A_{\bm y}\right| \\
	     &\geq |\C_1|\cdot\frac{(n-t +1)q }{t} +  |\C_0|\cdot\left( \binom{n}{t} - m(n,n-t,\lambda)\right)\\
	     &\geq  (|\C_1|+|\C_0|)\cdot\left( \binom{n}{t} - m(n,n-t,\lambda)\right)\\
	     &=  |\C|\cdot\left( \binom{n}{t} - m(n,n-t,\lambda)\right), 
	\end{align*}
	  where the last inequality holds whenever $q\geq \frac{t}{n-t +1} \left( \binom{n}{t} - m(n,n-t,\lambda)\right)$, as needed.
\end{proof}

\subsection{Proof of \cref{thm-codes-lowerbound}}

The main ingredient for the proof of \cref{thm-codes-lowerbound} is a version of \cref{lem-frankl} stated for multi-partite hypergraphs. We need some more definitions before formally stating it.

Suppose $\mF$ and $\mH$ are $k$-partite hypergraphs with vertex partitions $V(\mF) = \cup_{i = 1}^k W_i$ and $V(\mH) = \cup_{i = 1}^k V_i$, respectively.
A copy $\mF'$ of $\mF$ in $\mH$ is called {\it faithful} (with respect to the partitions above) if for each $i\in [k]$, the copy of $W_i$ in $V(\mF')$ is contained in $V_i$. An $\mF$-packing $\{(V(\mF_i),\mF_i):i\in[m]\}\subseteq\mathcal{H}$ is said to be {\it faithful}, if for every $j\in [m]$, $\mF_j$ is a faithful copy of $\mF$.

\begin{lemma}[\cite{liu2024near}]\label{lem-induced-packing}
    Let $n>t$ and $\mathcal{F}\subseteq\binom{[n]}{t}$ be fixed. Then viewing $\mathcal{F}$ as an $n$-partite hypergraph, there exists a faithful induced $\mF$-packing $\{(V(\mF_i),\mF_i):i\in[m]\}$ in $\mH_n^{(t)}(q)$ with $m\ge(1-o(1))\cdot\frac{\binom{n}{t} q^t}{|\mF|}$, where $o(1)\rightarrow 0$ as $q\rightarrow\infty$.
\end{lemma}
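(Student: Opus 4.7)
The plan is to adapt the R\"odl-nibble / Frankl--F\"uredi strategy underlying \cref{lem-frankl} to the multi-partite host $\mH_n^{(t)}(q)$. I will identify each faithful copy of $\mF$ with a vector $\phi\in[q]^n$: namely, the copy $\mF_\phi$ with vertex set $\{(i,\phi(i)):i\in[n]\}$ and edge set $\{\{(i,\phi(i)):i\in A\}:A\in\mF\}$. A faithful induced $\mF$-packing then corresponds to a collection $\Phi\subseteq[q]^n$ such that for any two distinct $\phi,\phi'\in\Phi$ the agreement set $I(\phi,\phi'):=\{i\in[n]:\phi(i)=\phi'(i)\}$ contains no edge of $\mF$ (which takes care of conditions (iii) and (v) of \cref{def-hypergraph-packing}) and has size at most $t$ (condition (iv)).

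The next step is to encode the packing problem as a hypergraph matching problem. Define an auxiliary hypergraph $\mG$ whose vertex set is the disjoint union of the edge sets of $\mH_n^{(t)}(q)$ and $\mH_n^{(t+1)}(q)$, together with one hyperedge $e_\phi$ per $\phi\in[q]^n$ consisting of the $|\mF|$ edges of $\mF_\phi$ and the $\binom{n}{t+1}$ many $(t+1)$-subsets of $V(\mF_\phi)$. A matching in $\mG$ is then precisely a faithful induced $\mF$-packing: disjointness on the $\mH_n^{(t)}(q)$-block enforces edge-disjointness (and automatically rules out the only bad case in (v), since an intersection of size exactly $t$ lying in $\mF$ would already violate (iii)), while disjointness on the $\mH_n^{(t+1)}(q)$-block enforces $|V(\mF_\phi)\cap V(\mF_{\phi'})|\leq t$. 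Every hyperedge of $\mG$ has fixed size $|\mF|+\binom{n}{t+1}$; each edge-type vertex lies in exactly $q^{n-t}$ hyperedges; each $(t+1)$-type vertex lies in exactly $q^{n-t-1}$ hyperedges; and every pair of distinct vertices has codegree at most $q^{n-t-1}$.

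Finally, one invokes a Pippenger--Spencer-type matching theorem on $\mG$ to extract a matching of size $(1-o(1))\cdot\binom{n}{t}q^t/|\mF|$ as $q\to\infty$. The hard part will be that $\mG$ is not almost regular: the two kinds of vertices have degrees differing by a factor of $q$, and the codegree bound $q^{n-t-1}$ is $o$ only of the \emph{larger} degree $q^{n-t}$. I would handle this either via a weighted Pippenger--Spencer where each $(t+1)$-type vertex carries a weight that equalises the two kinds of degrees, or via a blow-up trick that replaces each $(t+1)$-type vertex by $q$ parallel copies, producing an asymptotically $q^{n-t}$-regular hypergraph whose codegree is still $o(q^{n-t})$. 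The genuinely new multi-partite ingredient --- essentially the content of the refinement in~\cite{liu2024near} over Frankl--F\"uredi --- is to argue that the nibble can be coordinated across the parts $V_1,\dots,V_n$ while preserving faithfulness at every round, so that the resulting matching consists only of faithful copies; this multi-partite bookkeeping is what is not already contained in \cref{lem-frankl}.
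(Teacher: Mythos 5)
Your identification of faithful copies of $\mathcal{F}$ with vectors $\phi\in[q]^n$ via $\mathcal{F}_\phi=\{\{(i,\phi(i)):i\in A\}:A\in\mathcal{F}\}$ is too restrictive, and with it the target bound is provably out of reach. Every edge of such a copy has position set (its projection onto $[n]$) lying in $\mathcal{F}$, so $m$ pairwise edge-disjoint copies use only the $|\mathcal{F}|\cdot q^{t}$ edges of $\mathcal{H}_n^{(t)}(q)$ whose position set is in $\mathcal{F}$; this forces $m\le q^{t}$. The lemma, however, asks for $m\ge(1-o(1))\binom{n}{t}q^{t}/|\mathcal{F}|$, which strictly exceeds $q^{t}$ (for large $q$) whenever $\mathcal{F}\subsetneq\binom{[n]}{t}$ --- exactly the regime needed in the application, where $|\mathcal{F}|=\binom{n}{t}-m(n,n-t,\lambda)$. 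A visible symptom in your write-up is the assertion that each edge-type vertex has degree exactly $q^{n-t}$: a $t$-edge whose position set is not in $\mathcal{F}$ lies in no $e_\phi$ at all, so your auxiliary hypergraph $\mathcal{G}$ has $\bigl(\binom{n}{t}-|\mathcal{F}|\bigr)q^{t}$ isolated $t$-type vertices, not merely a factor-$q$ degree imbalance between the two vertex classes.

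The remedy is to do what \cref{lem-frankl} already does in the non-partite setting and admit \emph{all} isomorphic copies of $\mathcal{F}$ whose vertex set is a transversal of $V_1,\dots,V_n$, i.e.\ copies $\{\{(\sigma(j),\phi(\sigma(j))):j\in A\}:A\in\mathcal{F}\}$ indexed by $\phi\in[q]^n$ together with an arbitrary permutation $\sigma$ of $[n]$ (counted up to automorphisms of $\mathcal{F}$). These embeddings collectively cover every edge of $\mathcal{H}_n^{(t)}(q)$ with uniform degree, the quantity $\binom{n}{t}q^{t}/|\mathcal{F}|$ becomes the true fractional matching bound, and the rest of your scheme --- the $(t+1)$-type vertices enforcing the induced condition, the codegree check, and a Pippenger--Spencer nibble --- adapts to the corrected auxiliary hypergraph. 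The genuinely multi-partite content of \cite{liu2024near} is this richer embedding family and the verification that it stays near-regular with small codegrees, rather than the coordination of the nibble across parts that you describe.
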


\begin{proof}[Proof of Theorem~\ref{thm-codes-lowerbound}]

 Let $\mG\subseteq\binom{[n]}{n-t}$ be one of the largest $(n-t)$-uniform hypergraphs on $n$ vertices that do not contain $\lambda$ pairwise disjoint edges, where $t=\lceil\frac{(r-2)n}{r-1}\rceil$. Then by definition we have $|\mG|=m(n,n-t,\lambda)$. Let $\mG' = \{
    A\subseteq [n] : [n]\backslash A\in \mG \}$ and $\mF = \binom{[n]}{t}\backslash \mG'$. Clearly, $|\mG|=|\mG'|$ and 
        $|\mF| = \binom{n}{t} - |\mG|
              = \binom{n}{t} - m(n, n-t, \lambda)$.

    Applying \cref{lem-induced-packing} with $\mF$ defined as above gives a faithful induced $\mF$-packing $\{(V(\mF_i),\mF_i): i\in[m]\}$ in $\mH_n^{(t)}(q)$ with
    \begin{align*}
        m\ge (1-o(1))\cdot \frac{\binom{n}{t}q^t}{|\mF|}
        =
        (1- o(1))\cdot\frac{\binom{n}{t}q^t}{\binom{n}{t} - m(n, n-t, \lambda)}.
    \end{align*}
    where $o(1)\to 0$ as $q\to \infty$.
    Note that for each $i\in [m]$, $\mF_i$ is a faithful copy of $\mF$, where both $\mF$ and $\mH_n^{(t)}(q)$ are viewed as $n$-partite hypergraphs. 
    We can treat each copy $V(\mF_i)$ of $V(\mF)$ as a vector in $[q]^n$, according to (the inverse of) $\pi$ defined below \cref{def-hypergraph}. 
    Let $\C=\{\pi^{-1}(V(\mF_i)): i\in[m]\}$.
    
    To prove the theorem, it remains to show that $\C$ is $r$-focal-free. Assume for the sake of contradiction that $\{\bm x^1, \ldots, \bm x^r\}\subseteq\C$ forms an $r$-focal code with focus $\bm x^r$.
    It follows from \cref{obs:r-focal-code} that $[n]\setminus I(\bm x^r,\bm x^1),\ldots,[n]\setminus I(\bm x^r,\bm x^{r-1})$ are $r-1$ pairwise disjoint subsets. Moreover, assume without loss of generality that for each $i\in[r]$, $\bm x^i=\pi^{-1}(V(\mathcal{F}_i))$, where $\mathcal{F}_i$ is a copy of $\mathcal{F}$ in the previous $\mathcal{F}$-packing. Then, for each $i\in[r-1]$, we have $|I(\bm x^r,\bm x^i)|=|V(\mathcal{F}_r)\cap V(\mathcal{F}_i)|\le t$, which implies that $|[n]\setminus I(\bm x^r,\bm x^{i})|\ge n-t$. Combining the above discussion with \eqref{eq:lambda2}, one can infer that there are at least $\lambda$ distinct $i\in[r-1]$ such that $|[n]\setminus I(\bm x^r,\bm x^i)|=n-t$, which implies that $|I(\bm x^r,\bm x^i)|=|V(\mathcal{F}_r)\cap V(\mathcal{F}_i)|=t$. Then we obtain a contradiction due to the same reason as in the proof of \cref{thm-hypergraphs-lowerbound}, we omit the details.
\end{proof}

\end{document}